\theoremstyle{plain}
\newtheorem{theorem}{Theorem}[section]
\newtheorem{conjecture}[theorem]{Conjecture}
\newtheorem{lemma}[theorem]{Lemma}
\newtheorem{corollary}[theorem]{Corollary}
\newtheorem{algorithm}[theorem]{Algorithm}
\newtheorem*{mainconjecture}{Conjecture~\ref{conj:stability_squarefree}}
\newtheorem*{easyhalf}{Theorem~\ref{thm:stability_easy_half}}
\newtheorem{example}[theorem]{Example}
\theoremstyle{definition}
\theoremstyle{remark}
\numberwithin{equation}{section}
\newcommand{\CC}{\mathbb{C}}
\newcommand{\bS}{\mathbb{S}}
\newcommand{\QQ}{\mathbb{Q}}
\newcommand{\TT}{\mathbb{T}}
\newcommand{\ZZ}{\mathbb{Z}}
\newcommand{\SL}{\operatorname{SL}}
\newcommand{\skn}{S_k^{\mathrm{new}}(\Gamma_0(N))}
\def \new {\mathrm{new}}
\def \cS {\mathcal{S}}
\def \fS {\mathfrak{S}}
\def \lam {\lambda}
\def \cF {\mathcal{F}}
\def \cB {\mathcal{B}}
\title{Distinguishing newforms\footnote{Thanks to James Withers
for running some of the code. We also thank David
Loeffler, M. Ram Murty, Abhishek Saha and Fredrik Str\"omberg for useful comments.}}
\author{
Sam Chow\footnote{The first author was supported by the Elizabeth and Vernon
  Puzey scholarship, and is grateful towards the University of Melbourne for
their hospitality while preparing this memoir.}\\
University of Bristol\\
{\tt sam.chow42@gmail.com}\\
\and 
Alexandru Ghitza\footnote{The second author was supported by 
Discovery Grant DP120101942 from the Australian Research Council.}\\
University of Melbourne \\
{\tt aghitza@alum.mit.edu} \\
}
\date{\today}
\begin{document}
\thispagestyle{empty}

\maketitle

\begin{abstract}
Let $n_0(N,k)$ be the number of initial Fourier coefficients necessary to
distinguish newforms of level $N$ and even weight $k$. We produce extensive
data to support our conjecture that if $N$ is a fixed squarefree positive integer
and $k$ is large then $n_0(N,k)$ is the least prime that
does not divide $N$. 
\end{abstract}

\section{Introduction}

The predominant way of specifying a modular form is via its Fourier expansion
\begin{equation} \label{Fourier}
  f(q)=\sum_{n=0}^\infty a_n(f)q^n.
\end{equation}
Since this power series representation involves infinitely many coefficients,
a natural question is whether one can recognise a given form $f$ by looking
at only finitely many (initial) coefficients.  This is the object of a
classical result of Sturm:\footnote{Theorem~\ref{thm:sturm} is a special case of Sturm's main
result, which is concerned with congruences between modular forms.  The
particular case we state here was very likely already known to Hecke, long
before Sturm's work.}

\begin{theorem}[\cite{Stu1987}, see also~\cite{Mur1997}]
  \label{thm:sturm}
  Let $N, k\in\ZZ_{>0}$, and let $f, g\in M_k(\Gamma_0(N))$ with $f\neq g$.
  Then there exists
  \begin{equation}
    \label{eq:sturm}
    n\leq \frac{k}{12}\,[\SL_2(\ZZ)\colon \Gamma_0(N)]
  \end{equation}
  such that $a_n(f)\neq a_n(g)$.
\end{theorem}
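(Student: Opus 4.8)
The plan is to derive the bound from the valence (``$k/12$'') formula for $\SL_2(\ZZ)$ after the standard reduction from level $N$ to level $1$. I would first set $h = f-g \in M_k(\Gamma_0(N))$; by hypothesis $h\neq 0$, and $a_n(f)\neq a_n(g)$ exactly when $a_n(h)\neq 0$, so the task reduces to bounding the order of vanishing at infinity, $v_\infty(h) := \min\{n : a_n(h)\neq 0\}$, by $\tfrac{k}{12}[\SL_2(\ZZ):\Gamma_0(N)]$.

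The main device is the ``norm to level $1$''. Writing $m = [\SL_2(\ZZ):\Gamma_0(N)]$ and fixing right coset representatives $\gamma_1 = I, \gamma_2,\dots,\gamma_m$ for $\Gamma_0(N)\backslash\SL_2(\ZZ)$, I would form
\begin{equation*}
  \Phi \;=\; \prod_{i=1}^{m} h|_k\gamma_i,
\end{equation*}
with $|_k$ the weight-$k$ slash operator, and check that $\Phi\in M_{mk}(\SL_2(\ZZ))$ and $\Phi\neq 0$: invariance holds because right multiplication by any $\delta\in\SL_2(\ZZ)$ merely permutes the cosets (using $h|_k\eta = h$ for $\eta\in\Gamma_0(N)$), holomorphy on the upper half-plane is clear, and holomorphy at $\infty$ follows from $h$ being holomorphic at every cusp of $\Gamma_0(N)$; nonvanishing is clear since each slash is invertible.

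Next I would compare orders of vanishing at $\infty$: multiplying $q$-expansions gives $v_\infty(\Phi) = \sum_{i=1}^m v_\infty(h|_k\gamma_i)$ (the individual expansions may involve fractional powers of $q$ according to the cusp widths, but the product is an honest power series in $q$). Every term is $\geq 0$ because $h$ is holomorphic at the relevant cusp, and the $i=1$ term is exactly $v_\infty(h)$, so $v_\infty(h)\leq v_\infty(\Phi)$. Finally, a nonzero form in $M_w(\SL_2(\ZZ))$ vanishes at $\infty$ to order at most $w/12$ — immediate from the level-$1$ valence formula, or elementarily by repeatedly dividing by $\Delta$ and using $M_{<0}(\SL_2(\ZZ))=0$ — and applying this to $\Phi$ (of weight $mk$) yields $v_\infty(h)\leq mk/12 = \tfrac{k}{12}[\SL_2(\ZZ):\Gamma_0(N)]$.

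I expect the only delicate point to be verifying that $\Phi$ really is a holomorphic modular form for $\SL_2(\ZZ)$ — i.e., the coset-permutation argument for $\SL_2(\ZZ)$-invariance together with the passage from holomorphy of $h$ at all cusps of $\Gamma_0(N)$ to holomorphy of $\Phi$ at $\infty$ (including the width / $q^{1/w}$ bookkeeping) — after which everything reduces to the classical level-$1$ valence inequality. (One could instead cite the valence formula for $\Gamma_0(N)$ directly, discarding the nonnegative contributions from the other cusps and the elliptic points, but establishing that formula in general is essentially the same argument.)
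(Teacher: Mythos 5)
Your argument is correct and complete in outline: the paper itself states Theorem~\ref{thm:sturm} as a cited classical result without proof, and your norm-to-level-$1$ argument (setting $h=f-g$, forming $\Phi=\prod_i h|_k\gamma_i\in M_{mk}(\SL_2(\ZZ))$, and bounding $v_\infty(h)\leq v_\infty(\Phi)\leq mk/12$ via the level-$1$ valence inequality) is essentially the standard proof found in the references the paper cites. The only bookkeeping points — that the cusp $\infty$ has width $1$ for $\Gamma_0(N)$ so that $v_\infty(h)$ really is the least $n$ with $a_n(h)\neq 0$, and that the product of the fractional-power expansions at the other cusps contributes nonnegatively and yields an integral $q$-expansion by $T$-invariance of $\Phi$ — are exactly the ones you flag, and they go through.
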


We refer to~\eqref{eq:sturm} as the \emph{Sturm
bound}. It is sharp at this
level of generality.  However, many modular forms that occur naturally in
applications (especially in number-theoretic contexts) have additional
properties, such as being eigenvectors for the Hecke operators.  We ask the
question: \emph{Is it possible to sharpen the Sturm bound in the presence of
this extra information?} More precisely, let $n_0=n_0(N,k)$ be the smallest nonnegative
integer such that the following statement is true:

\begin{quotation}
  \noindent
  Let $f,g\in S_k(\Gamma_0(N))$ be newforms such that $a_n(f)=a_n(g)$ for all
  $n\leq n_0$.  Then $f=g$.
\end{quotation}

The main problem studied in this paper is the dependence of $n_0$ on the
parameters $N$ and $k$, for $N, k \in \ZZ_{>0}$.
Note that if $k$ is odd then $n_0 = 0$, since
$S_k(\Gamma_0(N)) = \{ 0 \}$ (see \cite[p. 15]{Kil2008}). We therefore
restrict attention to even weights $k$.
The empirical data that we computed strongly support the
following stability conjecture for squarefree levels:
\begin{mainconjecture}
  Let $N\in\ZZ_{>0}$ be squarefree. Then there exists $K\in\ZZ_{>0}$ such that if
  $k\geq K$ is an even integer then $n_0(N,k)$ is equal to the least prime that does 
not divide $N$.
\end{mainconjecture}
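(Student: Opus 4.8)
The plan is to reduce the conjecture to a simplicity statement for a single Hecke operator, and then to recognise that statement as a variant of Maeda's conjecture. Write $p_0$ for the least prime not dividing $N$. Since $N$ is squarefree, every prime $q<p_0$ divides $N$ exactly once, so by Atkin--Lehner theory each newform $f\in\skn$ has $a_q(f)\in\{\pm q^{k/2-1}\}$ for $q<p_0$, with the sign determined by the Atkin--Lehner sign $w_q(f)\in\{\pm1\}$. A newform is determined by its eigenvalues at primes, composite coefficients are given by the Hecke recursions, and every $n\le p_0$ which is composite has all its prime factors $<p_0$. Hence two newforms $f,g$ agree on $a_n$ for all $n\le p_0-1$ exactly when $w_q(f)=w_q(g)$ for all primes $q<p_0$, and they agree on $a_n$ for all $n\le p_0$ exactly when, in addition, $a_{p_0}(f)=a_{p_0}(g)$ (note $a_{p_0}$ is the $T_{p_0}$-eigenvalue, as $p_0\nmid N$). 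For a sign pattern $\sigma=(\sigma_q)_{q<p_0}$, let $S_\sigma\subseteq\skn$ be the subspace on which $W_q$ acts by $\sigma_q$ for each prime $q<p_0$: it is Hecke-stable, stable under $T_{p_0}$, spanned by newforms, and $\skn=\bigoplus_\sigma S_\sigma$. The above shows that $n_0(N,k)\ge p_0$ if and only if some $S_\sigma$ contains two distinct newforms, i.e.\ $\dim S_\sigma\ge2$ for some $\sigma$; and that $n_0(N,k)\le p_0$ if and only if $T_{p_0}$ has pairwise distinct eigenvalues on $S_\sigma$ (equivalently, squarefree characteristic polynomial) for every $\sigma$.

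The first equivalence yields the easy half. For fixed squarefree $N$ one has $\dim\skn=\frac{k-1}{12}\prod_{q\mid N}(q-1)+O_N(1)\to\infty$ as the even weight $k\to\infty$, whereas the number of sign patterns $\sigma$ is the fixed quantity $2^{\pi(p_0-1)}$; by the pigeonhole principle $\dim S_\sigma\ge2$ for some $\sigma$ once $k\ge K_1(N)$, so $n_0(N,k)\ge p_0$ for all even $k\ge K_1(N)$.

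The second equivalence is the hard half and carries all the arithmetic content: one must show that for each squarefree $N$, each $\sigma$, and all sufficiently large even $k$, the characteristic polynomial of $T_{p_0}$ on $S_\sigma$ has no repeated root. When $N=1$ one has $p_0=2$ and $S_\sigma=S_k(\SL_2(\ZZ))$, so this is implied by Maeda's conjecture (irreducibility of the characteristic polynomial of $T_2$ on $S_k(\SL_2(\ZZ))$); for general squarefree $N$ it is an Atkin--Lehner-refined analogue of Maeda's conjecture at the prime $p_0$. I would attack it in three steps. (a) Decompose the characteristic polynomial of $T_{p_0}$ on $S_\sigma$ as a product $\prod_O m_O$ over Galois orbits $O$ of newforms in $S_\sigma$, where $m_O(x)=\prod_{f\in O}(x-a_{p_0}(f))\in\QQ[x]$, and reduce the desired squarefreeness to the conjunction of (i) each $m_O$ is irreducible (equivalently, $\QQ(a_{p_0}(f))$ equals the full Hecke eigenvalue field of $f$, for $f\in O$) and (ii) distinct orbits have disjoint sets of $a_{p_0}$-values. (b) Use the vertical equidistribution theorem of Serre — the normalised eigenvalues $a_{p_0}(f)/p_0^{(k-1)/2}$ become equidistributed in $[-2,2]$ with respect to the $p_0$-adic Plancherel measure — to see that the eigenvalues on $S_\sigma$ spread over an interval of length $\asymp p_0^{(k-1)/2}$, which grows exponentially in $k$ while $\dim S_\sigma$ grows only linearly, so that ``most'' eigenvalues are far apart. (c) Upgrade this density statement to genuine separation: either via an effective form of the equidistribution, or by arguing that two newforms in $S_\sigma$ with a common value of $a_{p_0}$ would be congruent modulo some prime $\ell$ (their $T_{p_0}$-eigenvalues being equal algebraic integers) and then excluding such congruences for large weight. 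Step (c) is the main obstacle; it amounts to proving a weak form of Maeda's conjecture, which is open even at level $1$, and I do not expect steps (a)--(b) to suffice by themselves. This is why the result is stated as a conjecture. Nonetheless, the reduction isolates exactly what must be proved, explains why $K$ is allowed to depend on $N$, and shows that verifying squarefreeness of these characteristic polynomials for many pairs $(N,k)$ — which is what the numerical evidence does — is precisely the relevant test.
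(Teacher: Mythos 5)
Your proposal matches the paper's treatment: the statement is left as a conjecture there too, with exactly your two halves --- the lower bound $n_0\geq p_0$ proved unconditionally via Atkin--Lehner rigidity of $a_q$ for $q\mid N$ plus the growth of $\dim\skn$ and pigeonhole (the paper's Corollary~\ref{cor:geq} and Theorem~\ref{thm:stability_easy_half}), and the upper bound reduced to separability of the characteristic polynomial of $T_{p_0}$ (the paper's generalised Maeda Conjecture~\ref{conj:gen_maeda}(a), whose part (b) is precisely your observation that the Atkin--Lehner decomposition into the $S_\sigma$ is the only obstruction). Your refinement that separability is only needed on each $S_\sigma$, and your sketches (b)--(c) for attacking it, go slightly beyond the paper but, as you note, do not close the gap, which remains open even for $N=1$.
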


We note that the least prime that does 
not divide $N$ is bounded
above by $2(\log N + 1)$; see the proof of~\cite[Theorem~1]{Ghi2011}. The 
data also indicate a stability phenomenon in the non-squarefree level
case, but we have not found a simple conjectural characterisation of the
eventual value of $n_0$ in
this situation---there are cases where it appears to exceed the least prime that does 
not divide $N$. We can prove the ``easy half'' of
Conjecture~\ref{conj:stability_squarefree}:

\begin{easyhalf}
  Let $N \in \ZZ_{>0}$, and let $k \ge 38$ be an even integer. 
Then $n_0(N,k)$ is greater than or equal to the least prime that does 
not divide $N$.
\end{easyhalf}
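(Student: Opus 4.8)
The inequality $n_0(N,k) \ge p$, where $p$ denotes the least prime not dividing $N$, is equivalent to the existence of two \emph{distinct} newforms in $S_k(\Gamma_0(N))$ whose Fourier coefficients $a_n$ agree for all $n \le p-1$. The plan is to produce such a pair by a counting argument, the point being that these initial coefficients are highly constrained. If $n \le p-1$ then every prime factor of $n$ is smaller than $p$ and hence divides $N$; writing $n = \prod q^{e}$, Hecke multiplicativity gives $a_n(f) = \prod a_q(f)^{e}$ for a newform $f$ of level $N$. For a prime $q$ with $q^2 \mid N$ one has $a_q(f) = 0$, so any such coefficient vanishes identically; for $q \parallel N$ the theory of Atkin and Lehner gives $a_q(f) = -\varepsilon_q(f)\, q^{k/2-1}$ with $\varepsilon_q(f) \in \{\pm 1\}$. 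Hence the truncated vector $(a_n(f))_{1 \le n \le p-1}$ is determined by the sign pattern $(\varepsilon_q(f))$ over the primes $q \parallel N$ with $q<p$; if there are $r$ such primes, a newform has at most $2^{r}$ possible truncated vectors.

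Since the newforms of level $N$ and weight $k$ form a basis of $S_k^{\new}(\Gamma_0(N))$, their number is $\dim S_k^{\new}(\Gamma_0(N))$, so it suffices to establish
$$\dim S_k^{\new}(\Gamma_0(N)) > 2^{r}, \qquad r = \#\{q \text{ prime} : q \parallel N,\ q < p\},$$
for then two distinct newforms share a truncated vector and $n_0(N,k) \ge p$ follows. Observe that $r \le \pi(p)-1$, and that since every prime below $p$ divides $N$, the level $N$ is a multiple of the primorial $\prod_{q<p}q$; thus a large $p$ forces a large $N$.

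To prove the dimension bound I would combine the explicit formula for $\dim S_k(\Gamma_0(d))$ (in terms of the index, the genus, the elliptic point counts $\nu_2,\nu_3$ and the number of cusps) with the inversion formula $\dim S_k^{\new}(\Gamma_0(N)) = \sum_{d\mid N}\beta(N/d)\dim S_k(\Gamma_0(d))$, where $\beta$ is multiplicative with $\beta(1)=1$, $\beta(q)=-2$, $\beta(q^2)=1$, and $\beta(q^e)=0$ for $e\ge3$. Isolating the leading term yields
$$\dim S_k^{\new}(\Gamma_0(N)) = \frac{k-1}{12}\,h(N) + E(N,k),$$
where $h$ is multiplicative with $h(q)=q-1$, $h(q^2)=q^2-q-1$, $h(q^e)=q^{e-3}(q+1)(q-1)^2$ for $e\ge3$, and $E(N,k)$ is an explicitly bounded error, quasi-periodic in $k \bmod 12$. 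A short computation shows $h(N)/2^{r} \ge \prod_{q<p}\tfrac{q-1}{2}$, which equals $1$ for $p=2,7$, equals $\tfrac12$ for $p=3,5$, and grows quickly once $p\ge11$ (value $3$ at $p=11$). Hence for $p\ge7$ the main term alone exceeds $2^{r}$ with room to absorb $E(N,k)$ as soon as $k\ge38$, while the finitely many remaining shapes of $N$—those with $p\in\{2,3,5\}$, i.e.\ $N$ odd, $2\parallel N$, or $6\mid N$ with $5\nmid N$—are dispatched by substituting the exact dimension formula.

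The main obstacle is precisely this last point: making the estimate uniform across the infinitely many levels of small smallest-non-divisor and pinning down the sharp weight threshold. The function $\dim S_k^{\new}(\Gamma_0(N))$ is quasi-linear in $k$ with slope $h(N)/12 \ge 1/12$, but the subtracted old-space contribution together with the periodic corrections makes the bound tightest at small weights—for example $\dim S_{36}^{\new}(\Gamma_0(2)) = 2 = 2^{r}$, which is exactly why $k=36$ must be excluded—so the argument has essentially no slack, and the cutoff $k\ge38$ must be verified, not merely estimated, in these base cases.
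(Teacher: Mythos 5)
Your argument is essentially the paper's: Atkin--Lehner plus Hecke multiplicativity and a pigeonhole count reduce the claim to $\dim S_k^{\new}(\Gamma_0(N)) > 2^r$ (the paper's Corollary~\ref{cor:geq} uses the marginally weaker exponent $t\ge r$ counting all consecutive small primes dividing $N$), and this dimension bound is then verified from the explicit newform dimension formula --- the paper invokes Martin's formula \eqref{MartinFormula}, which is precisely the $\frac{k-1}{12}h(N)+E(N,k)$ expression you reconstruct by M\"obius inversion, and your inequality $h(N)/2^r\ge\prod_{q<p}\frac{q-1}{2}$ checks out. The one step you leave as a sketch --- uniformly absorbing $E(N,k)$ over the infinitely many levels in each small-$p$ class and pinning down the threshold (where $N=2$, $k=36$ is indeed the tight case) --- is exactly what the paper carries out via explicit estimates for $N\ge 1000$ together with direct computation for $N<1000$ and the special cases $N\in\{2,4,6,12\}$.
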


Many authors have studied the recognition problem for modular forms, e.g.
\cite{BCP2002},
\cite{CG2014}, \cite{Ghi2011},
\cite{GS2014}, \cite{GH1993},
\cite{Koh2004}, 
\cite{Kow2005}, \cite{Mor1985},
\cite{Mur1997},
\cite{Stu1987}.
Maeda's conjecture (Conjecture \ref{conj:maeda}) would imply that $n_0(1,k) \le 2$
for all $k \in \ZZ_{>0}$, and from this and \cite[Theorem 1]{Mar2005}
it would follow
that $n_0(1,k) = 2$ for all even numbers $k \ge 28$. 
The second author and J. Withers \cite{GW2015} have proposed a
generalisation of Maeda's conjecture that would imply 
Conjecture \ref{conj:stability_squarefree}; this is discussed in \S \ref{gen_maeda}.

Our algorithm for evaluating $n_0(N,k)$ is based on the fact that if $f$ is a normalised
Hecke eigenform and $n \in \ZZ_{>0}$ then $a_n(f)$ equals the eigenvalue of 
$f$ with respect to the Hecke operator $T_n$. Moreover, it suffices to consider $T_p$
for primes $p$ (see Lemma \ref{primes}). We consider
intersections of $T_p$-eigenspaces over all primes $p$ up to a point (call 
such intersections ``homes'').
Any home $H$ has a basis given by its newforms, so the number of newforms
in $H$ is equal to $\dim H$. We continue until there are no homes of dimension
greater than one.

Two main refinements improve the efficiency of our algorithm. We use modular
symbols instead of modular forms (see \eqref{iso}). Our second improvement is 
harder to describe. The idea is to factorise over $\QQ$ the characteristic 
polynomial of $T_p$, considering the kernel of each
irreducible factor. We intersect these kernels for small primes $p$, and run our
algorithm on each such intersection. This enables us to
work in smaller spaces, reducing the need to manipulate large matrices.

This paper is organised as follows. In \S \ref{background}, we clarify definitions
and recall key results. In \S \ref{algorithm}, we describe in detail our algorithm
for computing $n_0(N,k)$. In \S \ref{stability}, we discuss 
Conjecture \ref{conj:stability_squarefree} in more detail. In particular, we prove
Theorem \ref{thm:stability_easy_half}, and further address the case where
$N$ is not squarefree. Finally, in 
\S \ref{gen_maeda}, we relate Conjecture \ref{conj:stability_squarefree}
to a conjecture of the second author and J. Withers.

For $k \in \ZZ_{>0}$ and $\Gamma$ a congruence subgroup, 
we denote by $M_k(\Gamma)$ the complex
vector space of weight $k$ modular forms for $\Gamma$, and
write $S_k(\Gamma)$ for its cuspidal subspace.
The symbol $p$ is reserved for primes. For $r \in \ZZ_{>0}$, we write
$p_r$ for the $r$th smallest prime number. We shall write
$\omega(N)$ for the number of distinct prime divisors of $N$. The
algebraic closure of $\QQ$ will be denoted $\overline{\QQ}$.

\section{Some background}
\label{background}

In this section we recall some standard definitions and results. Let $N,k \in
\ZZ_{>0}$. We shall work in
\begin{equation} \label{Sdef}
S := \skn,
\end{equation}
the new subspace of $S_k(\Gamma_0(N))$. Here we refer the reader to
\cite[\S I.6]{DI1995}. (The space $S$ was first defined in \cite{AL1970}.) 

For each $n \in \ZZ_{>0}$ we have a Hecke operator $T_n$ acting on $S$,
and these commute (see \cite[\S 5.3]{DS2005} and \cite[Ch. 9]{Ste2007}). 
The \emph{Hecke algebra} is the commutative ring
generated by the $T_n$:
\begin{equation*}
\TT = \ZZ[T_1, T_2, \ldots].
\end{equation*}
A \emph{Hecke eigenform} is a modular form that is an eigenvector of $T_n$ for
every $n$. A Hecke eigenform is \emph{normalised} if $a_1(f) = 1$, where
$a_1(f)$ is as in \eqref{Fourier}. By \cite[Proposition 9.10]{Ste2007}, if $f$
is a normalised Hecke eigenform then
\begin{equation} \label{coincide}
T_n f = a_n(f) f \qquad (n \in \ZZ_{>0}),
\end{equation}
where $a_n(f)$ is as in \eqref{Fourier}. This means that we can compare
Fourier coefficients by studying eigenvalues and eigenspaces of the operators $T_n$.  
A \emph{newform} (in $S_k(\Gamma_0(N))$) is a normalised Hecke eigenform that lies in $S$. 
The proof of \cite[Theorem 5.8.2]{DS2005} shows that the set of newforms in $S$
is a basis for $S$. In particular, the space $S$ contains precisely $\dim S$ newforms.

Let $n \in \ZZ_{>0}$. From
\cite[Theorem 4.5.19]{Miy1997} (see also \cite[Theorem 3.48]{Shi1971}), we see that
the characteristic polynomial $\chi_n$ of $T_n$ acting on $S_k(\Gamma_0(N))$ has
rational integer coefficients. If $f \in S_k(\Gamma_0(N))$ is a normalised
Hecke eigenform then it follows from \eqref{coincide} that
$a_n(f)$ is a root of $\chi_n$, and is therefore an algebraic integer.

To hasten our calculations, we use modular symbols. There is a $\TT$-module
isomorphism
\begin{equation} \label{isoC}
\Phi: S \to \bS_k^{\new}(\Gamma_0(N); \CC)^+
\end{equation}
between $S$ and the plus subspace of the new subspace of the vector
space of cuspidal weight $k$ modular symbols for $\Gamma_0(N)$
over $\CC$ (see \cite[Theorem 8.23]{Ste2007}
and the discussion on \cite[p. 165]{Ste2007}). We perform many
of our calculations in $S^* := \bS_k^\new(\Gamma_0(N); \QQ)^+$. 
The Hecke algebra acts on $S^*$, and there are isomorphisms
\begin{equation} \label{symbols_extend}
S^* \otimes F \simeq \bS_k^\new(\Gamma_0(N); F)^+ \qquad (F= \overline \QQ, \CC)
\end{equation}
of $\TT$-modules. These isomorphisms follow from the definitions in \cite[Ch. 8]{Ste2007}.

Let $B = B(N,k)$ be the Sturm bound, and let 
$f_1, \ldots, f_d$ be the newforms in $S$. There exist
$r_1, \ldots, r_B \in \overline \QQ$ such that
\[
\sum_{n \le B} r_n a_n(f_i) \ne \sum_{n \le B} r_n a_n(f_j)  \qquad (1 \le i < j \le d).
\]
Indeed, if $M$ is a matrix over $\overline \QQ$
with distinct rows then the column span of $M$
contains a vector whose entries are distinct.
(With $P$ a large positive integer, take the first column plus $P$ times
the second column plus $P^2$ times the third column, and so on.)
The linear operator $T = \sum_{n \le B} r_n T_n$ acts irreducibly on $S$,
since its eigenvalues are distinct.
Hence, by \eqref{isoC} and \eqref{symbols_extend}, 
the linear operator $T$ acts irreducibly on $S^* \otimes \CC$,
and therefore uniquely
defines a basis of eigenvectors, up to rescaling.
The space $S^* \otimes \overline\QQ$ is stable under
this action, and $\overline \QQ$ is algebraically closed, so
$S^* \otimes \overline\QQ$ must have a basis $\cB$ of
eigenvectors for $T$. 
By \eqref{isoC} and \eqref{symbols_extend}, the modular symbols
$\Phi(f_1), \ldots, \Phi(f_d)$ form a basis of eigenvectors for the action
of $T$ on $S^* \otimes \CC$.
For $i = 1,2,\ldots,d$, choose
$s_i \in \cB$ equal to a constant times $\Phi(f_i)$. Now 
$f_i \mapsto s_i$ ($1 \le i \le d$) is a $\TT$-module isomorphism
\begin{equation} \label{iso}
S_k^\new(\Gamma_0(N); \overline\QQ) 
\simeq S^* \otimes \overline \QQ.
\end{equation}

\section{The algorithm for computing $n_0(N,k)$}
\label{algorithm}

Let $N, k \in \ZZ_{>0}$, and recall \eqref{Sdef}. We begin with the following observation:

\begin{lemma} \label{primes} Let $f,g \in M_k(\Gamma_0(N))$ be normalised Hecke eigenforms.
Suppose $a_n(f) \ne a_n(g)$ for some $n \in \ZZ_{>0}$. Then there exists a prime 
divisor $p$ of $n$ such that $a_p(f) \ne a_p(g)$.
\end{lemma}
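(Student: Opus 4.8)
The plan is to exploit the multiplicativity properties of the Fourier coefficients of normalised Hecke eigenforms, which follow from the Hecke relations recalled via \eqref{coincide}. Write $n = p_1^{e_1} \cdots p_r^{e_r}$. The coefficients $a_n$ of a normalised eigenform satisfy $a_{mn} = a_m a_n$ whenever $\gcd(m,n) = 1$, and for a prime power they satisfy the recursion $a_{p^{j+1}} = a_p a_{p^j} - p^{k-1} a_{p^{j-1}}$ (with the usual convention $a_{p^j} = 0$ adjusted for $p \mid N$, though the exact form of the bad-prime relation will not matter). The upshot is that $a_n(f)$ is a fixed polynomial, with coefficients depending only on $N$ and $k$, in the quantities $a_{p_1}(f), \ldots, a_{p_r}(f)$; call this polynomial $P_{n}(x_1, \ldots, x_r)$. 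The same polynomial expresses $a_n(g)$ in terms of $a_{p_1}(g), \ldots, a_{p_r}(g)$, because $P_n$ depends only on $N, k$ and the exponents $e_i$, not on the particular eigenform.

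I would then argue by contraposition. Suppose $a_p(f) = a_p(g)$ for every prime $p \mid n$. Then $a_{p_i}(f) = a_{p_i}(g)$ for $i = 1, \ldots, r$, and applying the identity $a_n = P_n(a_{p_1}, \ldots, a_{p_r})$ to $f$ and to $g$ gives $a_n(f) = P_n(a_{p_1}(f), \ldots, a_{p_r}(f)) = P_n(a_{p_1}(g), \ldots, a_{p_r}(g)) = a_n(g)$, contradicting the hypothesis $a_n(f) \ne a_n(g)$. Hence some prime divisor $p$ of $n$ must satisfy $a_p(f) \ne a_p(g)$.

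In writing this up carefully it is cleanest to avoid naming the polynomial $P_n$ explicitly and instead induct on $n$: the base case $n = 1$ is vacuous (or trivial, since $a_1 = 1$ for both), the prime-power case uses the two-term recursion and the fact that $a_p(f) = a_p(g)$ forces all higher $a_{p^j}(f) = a_{p^j}(g)$ by induction on $j$, and the general case multiplies together the prime-power pieces using $\gcd$-multiplicativity. The only mild subtlety, and the one place to be slightly careful, is the behaviour at primes $p$ dividing the level $N$: one should cite the standard description of $a_{p^j}(f)$ for $p \mid N$ (for newforms of squarefree level $a_p(f)^2 = p^{k-2}$ and $a_{p^j}(f) = a_p(f)^j$, but in general $a_{p^j}(f) = a_p(f)^j$ still holds for newforms), so that in every case $a_{p^j}(f)$ is determined by $a_p(f)$ together with $N$ and $k$. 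With that in hand the multiplicativity argument goes through uniformly, and this bad-prime bookkeeping is really the only obstacle — the core of the argument is just the Hecke multiplicativity relations.
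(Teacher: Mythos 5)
Your proof is correct and takes essentially the same route as the paper, which simply cites \eqref{coincide} together with the Hecke-operator relations $T_{mn}=T_mT_n$ for $(m,n)=1$ and the prime-power recursion of \cite[(5.10)]{DS2005}; these relations hold uniformly at the operator level (with the diamond operator vanishing for $p\mid N$), so the bad-prime bookkeeping you worry about is automatic and needs no separate citation.
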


\begin{proof} This follows from \eqref{coincide} and \cite[(5.10)]{DS2005}, upon
noting that $T_{mn} = T_m T_n$ whenever $(m,n) = 1$ (see \cite[\S 5.3]{DS2005}).
\end{proof}

In view of \eqref{coincide}, we now see that if $\dim S \ge 2$ then
$n_0(N,k)$ is the least prime $\ell$ such that
there do not exist distinct newforms $f, g \in S$ such that $f$ and $g$ have the
same $T_p$ eigenvalues for each prime $p \le \ell$.
Our basic algorithm is as follows:

\begin{algorithm} \label{basic}
Build $S$.
\begin{enumerate} 
\item If $\dim S < 2$, return 0.
\item Consider the eigenspaces of the
action of $T_2$ on $S$. Let $A_1, \ldots, A_a$ be the 
eigenspaces of dimension greater than one, and call these
the \emph{homes for $T_2$}. If $a = 0$, return 2.
\item Consider the eigenspaces of the action of $T_3$ on $S$, and intersect
these with $A_1, \ldots, A_a$, separately. Let $B_1, \ldots, B_b$ be the intersections of
dimension greater than one, and call these the \emph{homes for $T_3$}. If $b = 0$, return 3.
\item Repeat for $T_5, T_7, T_{11}, \ldots$. 
\end{enumerate}
\end{algorithm}

As the newforms in $S$ are linearly independent, 
the dimension of any subspace of $S$ is greater than or equal
to the number of newforms it contains. Thus, by the above discussion,
the Sturm bound implies that Algorithm \ref{basic} terminates
and returns an upper bound for $n_0$.
In fact the output of Algorithm \ref{basic} is exactly $n_0$, since
we can show that the dimension of any ``home'' is equal to the number of 
newforms it contains:

\begin{lemma} \label{HomeBasis}
Every home, as defined in steps 2 and 3 of Algorithm \ref{basic},
has a basis given by its newforms.
\end{lemma}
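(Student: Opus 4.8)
The plan is to show that each home is a \emph{span of newforms}, from which having a basis of newforms follows immediately (newforms are linearly independent, being a basis of $S$, so any subset of them spanning a home is a basis of that home). The key structural fact to exploit is that the Hecke operators $T_p$ are simultaneously diagonalisable on $S$ with eigenbasis $\{f_1,\dots,f_d\}$ the newforms, and that distinct newforms are distinguished by their systems of eigenvalues. I would proceed by induction on the stage of the algorithm: a home for $T_2$ is an eigenspace of $T_2$, and a home for $T_\ell$ (for $\ell$ the next prime) is the intersection of an eigenspace of $T_\ell$ with a home for the previous prime.

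First I would record the base observation: since $\{f_1,\dots,f_d\}$ is a basis of $S$ consisting of eigenvectors for every $T_n$, and in particular for $T_2$, each eigenspace $V_\lambda = \ker(T_2 - \lambda)$ on $S$ is spanned by those $f_i$ with $a_2(f_i) = \lambda$. Indeed, writing any $v \in V_\lambda$ as $v = \sum_i c_i f_i$ and applying $T_2 - \lambda$ gives $\sum_i c_i (a_2(f_i) - \lambda) f_i = 0$, so by linear independence $c_i = 0$ whenever $a_2(f_i) \ne \lambda$; conversely any such $f_i$ clearly lies in $V_\lambda$. Hence every home for $T_2$ is a span of a subset of the newforms, and those newforms form a basis of it.

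For the inductive step, suppose $H$ is a home for the prime $p_{r-1}$ and is spanned by a subset $\mathcal{F} \subseteq \{f_1,\dots,f_d\}$ of the newforms. A home for $p_r$ is an intersection $H \cap \ker(T_{p_r} - \mu)$ of dimension $> 1$. Running the same argument as above inside $H$: an element $v = \sum_{f_i \in \mathcal{F}} c_i f_i$ lies in $\ker(T_{p_r} - \mu)$ iff $c_i = 0$ whenever $a_{p_r}(f_i) \ne \mu$, so $H \cap \ker(T_{p_r} - \mu)$ is spanned by $\{f_i \in \mathcal{F} : a_{p_r}(f_i) = \mu\}$. Thus every home for $p_r$ is again a span of a subset of the newforms, completing the induction; in every case the spanning newforms, being linearly independent, are a basis.

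I do not expect a genuine obstacle here: the whole argument rests on the single fact (already established in the excerpt, via the proof of \cite[Theorem 5.8.2]{DS2005}) that the newforms form an eigenbasis of $S$ for all $T_n$. The only point requiring a little care is bookkeeping the induction cleanly — making sure the "ambient space" at each stage is itself a newform-span so that the elementary eigenspace argument can be re-run inside it — but this is routine rather than hard.
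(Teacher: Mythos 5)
Your proof is correct and follows essentially the same route as the paper's: an induction on primes using the fact that the newforms form a simultaneous eigenbasis, with each home identified as the span of the newforms sharing the relevant eigenvalues. The only cosmetic difference is that you establish spanning directly by expanding $v=\sum_i c_i f_i$ and killing the coefficients with the wrong eigenvalue, whereas the paper concludes via a dimension count against the characteristic polynomial of $T_p$ on $H$; both are the same idea.
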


\begin{proof} We induct on primes. As discussed in \S \ref{background}, the space $S$ has a basis given by its newforms. Let $p$ be prime. If $p=2$, let $H=S$. Otherwise, let $H$ be a home for the Hecke operator corresponding to the prime before $p$. Our inductive hypothesis is that the set $\{ f_1, \ldots, f_d \}$ of newforms in $H$ is a basis for $H$. Let $B$ be a home for $T_p$ that comes from intersecting with $H$ in step 3 of Algorithm \ref{basic} (if $p=2$, let $B$ be any home for $T_2$). It remains to show that the newforms in $B$ constitute a basis for $B$.

Note that $T_p$ acts on $H$, since the Hecke operators commute. Further, the home $B$ is an eigenspace of this action. Recalling \eqref{Fourier}, and letting $a_p(f_i) = \lam_i$ ($1 \le i \le d$), we see that the characteristic polynomial of this action is $\prod_{i \le d} (X - \lam_i)$. Let $\lam$ be the eigenvalue associated to $B$, and let $I$ be the set of $i \in \{1,2,\ldots,d\}$ such that $\lam_i = \lam$. The set of newforms in $B$ is $\{f_i: i \in I\}$. This is a basis for $B$, being a linearly independent subset of size $|I| = \dim B$.
\end{proof}

Using the software Sage \cite{Ste2012}, we may implement Algorithm \ref{basic}.
It suffices to work over $\overline{\QQ}$, since the
Fourier coefficients of normalised Hecke eigenforms are algebraic. 
Moreover, by \eqref{iso}, we may use modular symbols instead of modular forms. 
These changes improve the speed of our algorithm. 
They are implemented in Sage as follows:
\begin{algorithm} \label{base_extend}
Build $S^* =\bS_k^{\new}(\Gamma_0(N); \QQ)^+$. Suppose we wish to build the eigenspaces for the action of a Hecke operator on $S_k^\new(\Gamma_0(N); \overline{\QQ})$. Compute the matrix of its action on $S^*$, then use the command \begin{verbatim}base_extend\end{verbatim} to consider it as a matrix with entries in $\overline{\QQ}$. Build the eigenspaces for this matrix. 
\end{algorithm}

We thus produce the eigenspaces in $S^* \otimes \overline \QQ$, which correspond via \eqref{iso} to the eigenspaces in $S_k^\new(\Gamma_0(N); \overline{\QQ})$. The drawback of the algorithm as described thus far (that is, modifying Algorithm \ref{basic} using Algorithm \ref{base_extend}) is that it requires the manipulation of large matrices. To overcome this, we introduce the following refinement:
\begin{itemize}
\item Let $q$ be a prime number.
\item Consider, as a polynomial in $T_2$, the characteristic polynomial of the action of $T_2$ on $S^*$. Factorising this over $\QQ$, consider the irreducible factors of dimension greater than 1, and take their kernels. Call these the \emph{streets for $T_2$}.
\item Compute the corresponding kernels with $T_3$ in place of $T_2$, intersect them with the streets for $T_2$, and take the intersections of dimension greater than one. Call these the \emph{streets for $T_3$}.
\item Repeat for $T_5, \ldots, T_q$.
\item Return the streets for $T_q$, and call these the \emph{final streets}.
\end{itemize}

Let $q$ be prime, let $\cF$ be a final street, 
and let $\cF' = \cF \otimes \overline{\QQ}$. 
We seek to show that running the
algorithm with $\cF$ in place of $S^*$ returns the smallest integer $m$
such that if $f,g \in \cF'$ are newforms such that $a_n(f) = a_n(g)$
for all $n \le m$ then $f=g$ (\emph{newforms} are understood with 
reference to \eqref{iso}). For this purpose, it suffices to obtain the 
appropriate analogue to Lemma \ref{HomeBasis}. By the proof of Lemma \ref{HomeBasis}, 
it remains to show that $\cF'$ has a basis given by its newforms.

\begin{lemma}
Let $q$ be prime, and let $\cF$ be a corresponding final street. Then 
$\cF \otimes \overline{\QQ} $ has a basis given by its newforms.
\end{lemma}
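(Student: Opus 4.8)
The plan is to prove something a little sharper: that $\cF \otimes \overline{\QQ}$ is a \emph{coordinate subspace} of $S^* \otimes \overline{\QQ}$ with respect to the eigenbasis $\cB = \{s_1, \ldots, s_d\}$ furnished by \eqref{iso}, that is, $\cF \otimes \overline{\QQ} = \operatorname{span}_{\overline{\QQ}}\{s_i : i \in I\}$ for some $I \subseteq \{1, \ldots, d\}$. Once this is known the lemma follows at once, exactly as in the last line of the proof of Lemma~\ref{HomeBasis}: the set $\{s_i : i \in I\}$ is linearly independent and spans $\cF \otimes \overline{\QQ}$, hence is a basis; and since $\{s_1, \ldots, s_d\}$ is a basis of the ambient space, no $s_j$ with $j \notin I$ lies in $\cF \otimes \overline{\QQ}$, so $\{s_i : i \in I\}$ is precisely the set of newforms contained in $\cF \otimes \overline{\QQ}$ (via \eqref{iso}, the newforms in $S^* \otimes \overline{\QQ}$ are exactly $s_1, \ldots, s_d$, the images of the newforms $f_1, \ldots, f_d$ spanning $S$).

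The coordinate description I would establish by induction on the primes $p_1 = 2 < p_2 < \cdots$ up to $q$, the inductive claim being that every street for $T_{p_r}$ becomes, after $\otimes\,\overline{\QQ}$, a coordinate subspace $\operatorname{span}\{s_i : i \in I\}$. The engine is that each $T_p$ acts on $S^* \otimes \overline{\QQ}$ diagonally in the basis $\{s_i\}$ with $T_p s_i = a_p(f_i)\,s_i$ (by \eqref{coincide} and \eqref{iso}); consequently, for any $P \in \QQ[X]$,
\[
\ker\bigl(P(T_p)|_{S^* \otimes \overline{\QQ}}\bigr) \;=\; \operatorname{span}\{s_i : P(a_p(f_i)) = 0\},
\]
because $\{s_1,\dots,s_d\}$ is a basis. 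Since $-\otimes_{\QQ}\overline{\QQ}$ is exact it preserves kernels of $\QQ$-linear maps and intersections of $\QQ$-subspaces, so for the base case a street for $T_2$ — a kernel $\ker(P(T_2)|_{S^*})$ with $P$ a $\QQ$-irreducible factor (of degree $>1$) of the characteristic polynomial of $T_2$ on $S^*$ — tensors up to the coordinate subspace displayed above. For the inductive step, a street for $T_{p_r}$ has the form $\cS \cap \ker(Q(T_{p_r})|_{S^*})$ with $\cS$ a street for $T_{p_{r-1}}$ and $Q$ a $\QQ$-irreducible factor of the characteristic polynomial of $T_{p_r}$ on $S^*$; tensoring with $\overline{\QQ}$ turns this into the intersection of $\cS \otimes \overline{\QQ}$ (a coordinate subspace by the inductive hypothesis) with $\operatorname{span}\{s_i : Q(a_{p_r}(f_i)) = 0\}$, and the intersection of two coordinate subspaces is the coordinate subspace on the intersection of the two index sets. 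Taking $p_r = q$ gives the claim for $\cF$.

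I do not anticipate a real obstacle: the only point needing care is the compatibility of the rational structure with base change — one must be sure that the $\overline{\QQ}$-space attached to a street is literally the kernel (respectively the intersection of kernels) of the \emph{same} Hecke operators acting on $S^* \otimes \overline{\QQ}$, which is precisely what exactness of $-\otimes_{\QQ}\overline{\QQ}$ delivers. Everything else is the bookkeeping of coordinate subspaces together with facts already in hand, namely that $\{s_1, \ldots, s_d\}$ is a simultaneous Hecke eigenbasis of $S^* \otimes \overline{\QQ}$ and that these vectors are exactly its newforms. (An alternative route avoids the induction by observing that $\cF$ is stable under every Hecke operator, so $\cF \otimes \overline{\QQ}$ is a subrepresentation of a diagonalisable commuting family with pairwise distinct eigensystems and is therefore spanned by the $s_i$ it contains; but the inductive argument is more transparent given how the streets are defined.)
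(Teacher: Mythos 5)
Your proposal is correct and follows essentially the same route as the paper: induction on the primes up to $q$, with the key step being that $T_p$ acts diagonally in the newform basis, so $\ker P(T_p)$ is spanned by exactly those basis newforms whose $p$-th coefficient is a root of $P$ (the paper writes this as $0=P_j(T_p)\sum_i c_i f_i=\sum_i c_iP_j(\lam_i)f_i$, forcing $c_i=0$ off the index set). Your only real departure is that you carry out the base change from $S^*$ to $S^*\otimes\overline\QQ$ explicitly via exactness of $-\otimes_\QQ\overline\QQ$, where the paper simply declares that one may work in $S_k^\new(\Gamma_0(N);\overline\QQ)$ from the outset; this is a welcome extra degree of care rather than a different argument.
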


\begin{proof}
By the discussion after Algorithm \ref{base_extend}, we may assume that 
$\fS := S_k^\new(\Gamma_0(N); \overline \QQ)$ 
is used, rather than $S^*$. We regard
streets as intersections of kernels in $\fS$, and 
proceed by induction on primes. 
The base case is $\fS$, which has a basis given by its newforms.
Let $p$ be prime. If $p = 2$, let $F = \fS$. Otherwise, let $F$ be a street
for the Hecke operator corresponding to the prime before $p$. Our inductive
hypothesis is that the set $\{ f_1, \ldots, f_d \}$ of newforms in $F$
is a basis for $F$. 
Consider the characteristic polynomial $\chi_p$ of the action 
of $T_p$ on $\fS$. 
Factorising $\chi_p$ over $\QQ$,
let $P_1, \ldots, P_t$ be the distinct monic irreducible factors, and let 
$X_j = F \cap \ker P_j(T_p)$ ($1 \le j \le t$).
It remains to show that each $X_j$ has a basis given by its newforms.

Recall \eqref{Fourier}, and let $a_p(f_i) = \lam_i$ $(1 \le i \le d)$. 
Fix $j \in \{1,2, \ldots,t\}$,
and let $I$ be the set of $i \in \{1,2,\ldots,d\}$ such that $P_j(\lam_i) = 0$.
The set of newforms in $X_j$ is $\{ f_i : i \in I \}$. This set is linearly
independent, so it remains to show that it spans $X_j$.
Let $f \in X_j$, and write $f = c_1 f_1 + \ldots + c_d f_d$ with
$c_1, \ldots, c_d \in \overline \QQ$. Now
\[
0 = P_j(T_p) (c_1 f_1 + \ldots + c_d f_d) = \sum_{i \le d} c_i P_j(\lam_i) f_i,
\]
so $c_i = 0$ whenever $i \notin I$. Now $f$ lies in the span of 
$\{ f_i : i \in I \}$, which completes the proof.
\end{proof}

Let $q$ be a prime number, and build the final streets as above.
We run our algorithm on each of the final streets,
and consider the maximum output, $m$. Suppose $m \ge q$.
There exist distinct newforms $f,g \in S$ whose 
Fourier coefficients satisfy $a_p(f) = a_p(g)$ for all primes $p < m$; so $n_0 \ge m$.
Further, there cannot exist distinct newforms $f,g \in S$ such that
$a_p(f) = a_p (g)$ for all primes $p \le m$ (otherwise
they would be in $\cF \otimes \overline{\QQ}$ for the same
final street $\cF$, contradicting the fact that
$m$ was the maximum output obtained by running the algorithm on 
the final streets). Hence $n_0 \le m$, so we must have $n_0 = m$.

To summarise the above discussion, if the maximum output is greater
than or equal to our chosen prime number $q$, then it equals $n_0$.
Thus, the following procedure returns $n_0(N,k)$:

\begin{itemize}
\item If $\dim S < 2$, return 0.
\item Choose $q=7$, and run the algorithm on each of the final streets, 
taking the maximum output $m$. If $m \ge q$, return $m$.
\item Repeat for $q=5,3,2$.
\item Return 2.
\end{itemize}

For efficiency, we adopt one final finesse. By similar reasoning to above, 
we note that if $m < q$ then $n_0 \le q$. We can sometimes use this to 
deduce the value of $n_0$ without completing every step of the algorithm.
For instance, if we know that $n_0 \le 7$, and that one of the final
streets for $q=5$ returns 7, then we must have $n_0 = 7$.
Our full Sage \cite{Ste2012} code may be found at the second
author's webpage.\footnote{\url{http://aghitza.org/research/}} A sample of the resulting data
is given in the appendix.

\section{The stability conjecture}
\label{stability}

Our data suggest that if $N$ is fixed then 
$n_0(N,k)$ stabilises as $k$ increases ($k$ even); see
the appendix. The evidence
is particularly compelling when $N$ is squarefree, and we propound a more 
precise statement in this case:

\begin{conjecture}
  \label{conj:stability_squarefree}
  Let $N\in\ZZ_{>0}$ be squarefree. Then there exists $K\in\ZZ_{>0}$ such that if
  $k\geq K$ is an even integer then $n_0(N,k)$ is equal to the least prime that does 
not divide $N$.
\end{conjecture}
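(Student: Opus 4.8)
The plan is to separate $n_0(N,k)$ into its two defining inequalities and to attack the harder one. Write $\ell=\ell(N)$ for the least prime not dividing $N$. The bound $n_0(N,k)\ge\ell$ is already Theorem~\ref{thm:stability_easy_half}, valid for every even $k\ge 38$, so the entire content of the conjecture is the reverse inequality $n_0(N,k)\le\ell$ for all large even $k$: that is, any two distinct newforms $f,g\in S$ must satisfy $a_p(f)\ne a_p(g)$ for some prime $p\le\ell$.

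First I would reorganise $S$ according to Atkin--Lehner eigenvalues. Since $N$ is squarefree, every prime $p\mid N$ divides $N$ exactly once, so on a newform the Atkin--Lehner involution $W_p$ acts by a sign $\varepsilon_p(f)\in\{\pm1\}$, and $a_p(f)$ is determined by that sign (indeed $a_p(f)^2=p^{k-2}$, so $a_p(f)\in\{\pm p^{k/2-1}\}$). The $W_p$ for $p\mid N$ commute with each other and with every $T_\ell$ for $\ell\nmid N$, giving a decomposition $S=\bigoplus_{\varepsilon}S^{\varepsilon}$ into simultaneous Atkin--Lehner eigenspaces, each preserved by $T_\ell$, with each newform lying in exactly one $S^{\varepsilon}$. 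By the definition of $\ell$, every prime $p<\ell$ divides $N$; hence two newforms $f,g$ satisfy $a_p(f)=a_p(g)$ for all primes $p<\ell$ precisely when $\varepsilon(f)=\varepsilon(g)$, i.e. when they lie in the same component $S^{\varepsilon}$. Consequently $n_0(N,k)\le\ell$ is \emph{equivalent} to the statement that, for each $\varepsilon$, the operator $T_\ell$ has pairwise distinct eigenvalues on $S^{\varepsilon}$ --- equivalently, that the characteristic polynomial of $T_\ell$ acting on $S^{\varepsilon}$ (which has rational integer coefficients, as recalled in \S\ref{background}) is separable.

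The core step is therefore to establish this separability for $k$ sufficiently large. The natural route is to invoke the generalisation of Maeda's conjecture proposed in \cite{GW2015} and discussed in \S\ref{gen_maeda}, which predicts that the characteristic polynomial of $T_n$ ($n\ge 2$) on each Atkin--Lehner component of $\skn$ is irreducible over $\QQ$ once $k$ is large. Irreducibility of a nonconstant polynomial over a field of characteristic $0$ forces separability, so each such component would then be separated by $T_\ell$; combined with Theorem~\ref{thm:stability_easy_half} this yields $n_0(N,k)=\ell$ for all large even $k$. (Components with $\dim S^{\varepsilon}\le 1$ need no check, and the finitely many small $k$, together with the small levels where $S=\{0\}$, are absorbed into the constant $K$.)

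The main obstacle is exactly this separability input: it is of essentially the same depth as Maeda's conjecture itself and is unknown unconditionally even for $N=1$. I do not see how to sidestep it by a congruence-plus-Sturm argument, since agreement of $a_p(f)$ and $a_p(g)$ at only the primes $p\le\ell$ is far too little to force a congruence between $f$ and $g$; nor do the known equidistribution results for Hecke eigenvalues (which describe the bulk behaviour of the $a_\ell$ as $k\to\infty$) preclude exact coincidences among the finitely many eigenvalues on a fixed component. A more modest unconditional target would be to prove merely that the $T_\ell$-characteristic polynomial on each $S^{\varepsilon}$ has no repeated root for large $k$ --- weaker than irreducibility --- but even this appears open, so the honest status of the plan is conditional on \cite{GW2015}.
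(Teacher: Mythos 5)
This statement is a conjecture: the paper does not prove it, and offers only numerical evidence, the unconditional lower bound (Theorem~\ref{thm:stability_easy_half}), and a proof that part~(a) of Conjecture~\ref{conj:gen_maeda} implies it. Your proposal has the same architecture --- cite the easy half for $n_0\geq\ell$, and derive $n_0\leq\ell$ from a Maeda-type input --- and you are rightly upfront that the key input is of Maeda depth and open. However, your reduction of $n_0(N,k)\leq\ell$ to \emph{component-wise} separability contains a genuine error. You assert that two newforms agree at all primes $p<\ell$ precisely when they lie in the same Atkin--Lehner component $S^{\varepsilon}$. The decomposition is indexed by the full sign vector $(\varepsilon_p)_{p\mid N}$, and $N$ may have prime divisors exceeding $\ell$: take $N=10$, so $\ell=3$ and the components are indexed by $(\varepsilon_2,\varepsilon_5)$. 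Two newforms with $\varepsilon_2(f)=\varepsilon_2(g)$ but $\varepsilon_5(f)\neq\varepsilon_5(g)$ agree at the single prime $2<\ell$ yet lie in different components, so irreducibility (hence separability) of the characteristic polynomial of $T_3$ on each component \emph{separately} does not prevent $a_3(f)=a_3(g)$ across components; distinct components could a priori contribute the same irreducible factor, hence a common root. Your hypothesis therefore does not deliver $n_0\leq 3$, and the claimed equivalence only holds when the prime divisors of $N$ form an initial segment $2,3,\ldots,p_t$.

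What is actually needed --- and what the paper conditions on in the implication proved in \S\ref{gen_maeda} --- is separability of the characteristic polynomial of $T_\ell$ on all of $\skn$, which is exactly part~(a) of Conjecture~\ref{conj:gen_maeda}. Since the newforms form a basis of $\skn$, their $T_\ell$-eigenvalues exhaust the roots of that polynomial, so global separability forces $a_\ell(f_i)\neq a_\ell(f_j)$ for all $i\neq j$ and hence $n_0\leq\ell$ with no Atkin--Lehner bookkeeping at all. To repair your argument, either invoke that global part~(a) directly, or supplement component-wise irreducibility with the assertion that distinct components yield coprime (in particular distinct) irreducible factors. With that correction your conditional proof coincides with the paper's; without it, the step from the generalised Maeda conjecture to $n_0\leq\ell$ does not close.
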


Part of this conjecture can be obtained from the following result:
\begin{theorem}[{Atkin-Lehner~\cite[Theorem 3]{AL1970}}]
  \label{thm:atkinlehner}
  Let $N, k\in\ZZ_{>0}$, let $f \in S_k(\Gamma_0(N))$ be a newform,
 and let $p$ be a prime dividing $N$.
  \begin{enumerate}[(a)]
    \item If $p^2\mid N$ then $a_p(f)=0$.
    \item If $p^2\nmid N$ then $a_p(f)=\pm p^{\frac{k}{2}-1}$.
  \end{enumerate}
\end{theorem}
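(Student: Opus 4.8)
This is Atkin and Lehner's theorem \cite{AL1970}; the plan, following them, is to work with the operators $U_p$, $V_p$ and the Atkin-Lehner involutions (or, equivalently, with the local component $\pi_p$ of the automorphic representation attached to $f$). Fix $p\mid N$ and let $p^\nu$ be the exact power of $p$ dividing $N$. On $S_k(\Gamma_0(N))$ the Hecke operator $T_p$ is the operator $U_p$, which on $q$-expansions sends $\sum a_n q^n$ to $\sum a_{np}q^n$; since $f$ is a normalised eigenform, $U_pf=a_p(f)f$. Two further operators enter: the shift $V_p\colon\sum a_n q^n\mapsto\sum a_n q^{np}$, which multiplies the level by $p$ and satisfies $U_pV_p=\mathrm{id}$; and the Atkin-Lehner involution $W_Q$ attached to $Q=p^\nu$, which normalises $\Gamma_0(N)$, preserves the new subspace, and commutes with every $T_\ell$ for $\ell\nmid N$, so that by multiplicity one $W_Qf=\eta f$ for a sign $\eta=\pm 1$.

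For part (b), where $\nu=1$, the crux is the identity $U_p^2=p^{k-2}$ on the $p$-new subspace of $S_k(\Gamma_0(N))$---equivalently, $U_pW_p$ acts there as the scalar $\pm p^{k/2-1}$, the sign depending on how $W_p$ is normalised. Granting this, $a_p(f)^2=p^{k-2}$, hence $a_p(f)=\pm p^{k/2-1}$. I would prove the identity by expanding $W_p$ and $U_p$ as finite sums of coset operators and simplifying, using $p\parallel N$ and $W_p^2=\mathrm{id}$; its representation-theoretic content is that $f$ being $p$-new with $p\parallel N$ and trivial nebentypus forces $\pi_p$ to be an unramified twist of the Steinberg representation, whose new vector is a $U_p$-eigenvector with eigenvalue $\pm p^{k/2-1}$.

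For part (a), where $\nu\ge 2$, the content to establish is that $U_p$ vanishes on the $p$-new subspace of $S_k(\Gamma_0(N))$; equivalently, any $U_p$-eigenform of level $N$ with non-zero eigenvalue is $p$-old, which $f$ is not. I would extract this from the coset decomposition of $U_p$, the identity $U_pV_p=\mathrm{id}$, and the behaviour of $W_Q$, which together push such an eigenform into the span of the two degeneracy images from level $N/p$. Representation-theoretically, $\pi_p$ then has conductor at least $p^2$---supercuspidal, a ramified twist of Steinberg, or a ramified principal series---and $U_p$ annihilates the new vector of every such $\pi_p$. The genuine obstacle, in either part, is not the final scalar computation but the Atkin-Lehner-Li structure underneath it: that the $p$-new subspace is exactly the orthogonal complement of the span of the two degeneracy images from level $N/p$, that it is stable under $U_p$ and $W_Q$, and that multiplicity one holds there. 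I would take this structure as given---citing \cite{AL1970}---and supply only the two scalar facts above: $U_p^2=p^{k-2}$ when $\nu=1$ and $U_p=0$ on the $p$-new subspace when $\nu\ge 2$.
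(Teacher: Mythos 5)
This theorem is imported verbatim from Atkin--Lehner (their Theorem 3); the paper supplies no proof of its own, only the citation, so there is nothing internal to compare your argument against. Your outline is a faithful and mathematically accurate account of the standard proof: the identification $T_p=U_p$ for $p\mid N$, the reduction via the Atkin--Lehner involution and multiplicity one to the two scalar facts ($U_p^2=p^{k-2}$ on the $p$-new subspace when $p\,\|\,N$, and $U_p=0$ there when $p^2\mid N$), and the representation-theoretic reading (unramified quadratic twist of Steinberg versus conductor $\geq p^2$ with trivial central character) are all correct. Note only that, like the paper, you ultimately defer to \cite{AL1970} for the substance --- the two scalar identities are described rather than derived --- so what you have is a correct proof plan consistent with the source the authors cite, not an independent proof.
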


So for primes $p\mid N$, the eigenvalue $a_p(f)$ is heavily prescribed, which
makes the operator $T_p$ particularly bad at telling apart eigenforms. Thus,
we would expect $n_0(N,k)$ to be greater than or equal to the least prime that does 
not divide $N$. This is indeed the case if the space $S = \skn$ is
sufficiently large:
\begin{corollary}
  \label{cor:geq}
  Let $N,k\in\ZZ_{>0}$.  Let $t\in\ZZ_{\geq 0}$ be the number of consecutive
  primes, starting from $2$, that divide $N$ (so $p_{t+1}$ is the least prime that does 
not divide $N$). Suppose
  \begin{equation*}
    \dim S > 2^t.
  \end{equation*}
Then $n_0$ is greater than or equal to the least prime that does 
not divide $N$.
\end{corollary}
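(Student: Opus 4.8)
The plan is to exhibit two distinct newforms in $S$ that agree on the Hecke eigenvalues $a_p$ for every prime $p$ below $p_{t+1}$, and then read off the conclusion from the combinatorial description of $n_0$ established in \S\ref{algorithm}. First I would record the trivial but crucial bookkeeping point: the primes strictly smaller than $p_{t+1}$ are exactly $p_1,\ldots,p_t$, and each of these divides $N$ by the very definition of $t$. Next, for $1\le i\le t$ and any newform $f\in S$, Theorem~\ref{thm:atkinlehner} pins down $a_{p_i}(f)$ to at most two explicit values: it is $0$ when $p_i^2\mid N$, and it is $\pm p_i^{\frac{k}{2}-1}$ when $p_i^2\nmid N$. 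Consequently the map sending a newform $f\in S$ to the tuple $\bigl(a_{p_1}(f),\ldots,a_{p_t}(f)\bigr)$ takes values in a set of cardinality at most $2^t$.

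The key step is then a pigeonhole argument. Recall from \S\ref{background} that the newforms in $S$ form a basis of $S$, so $S$ contains precisely $\dim S$ of them; note also that the hypothesis $\dim S>2^t\ge 1$ forces $\dim S\ge 2$, so we are not in the degenerate case $n_0=0$. Since $\dim S>2^t$ is larger than the size of the target set, two distinct newforms $f,g\in S$ are sent to the same tuple, i.e. $a_{p_i}(f)=a_{p_i}(g)$ for $1\le i\le t$; equivalently, $a_p(f)=a_p(g)$ for every prime $p<p_{t+1}$. By \eqref{coincide} the forms $f$ and $g$ then share the same $T_p$-eigenvalue for each such $p$, and by Lemma~\ref{primes} it follows that $a_n(f)=a_n(g)$ for every $n$ all of whose prime divisors are $<p_{t+1}$; in particular this holds for all $n\le p_{t+1}-1$.

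Finally I would invoke the definition of $n_0$. Since newforms in $S_k(\Gamma_0(N))$ are exactly the newforms of $S$, we have produced distinct newforms $f\neq g$ with $a_n(f)=a_n(g)$ for all $n\le p_{t+1}-1$, so $n_0$ cannot be as small as $p_{t+1}-1$; that is, $n_0\ge p_{t+1}$, which is the least prime not dividing $N$. I do not anticipate a genuine obstacle here: essentially all the content is already packaged in Theorem~\ref{thm:atkinlehner}, which prescribes $a_p(f)$ for the primes $p\mid N$. The only points needing a little care are the passage from ``agreement on $a_p$ for primes $p<p_{t+1}$'' to ``agreement on $a_n$ for $n\le p_{t+1}-1$'' via Lemma~\ref{primes}, and the edge case $t=0$, where the tuple is empty and the pigeonhole merely says that any two of the at least two distinct newforms vacuously agree on all primes $p<2$, giving $n_0\ge 2=p_1$.
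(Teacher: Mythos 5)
Your proof is correct and follows essentially the same route as the paper: Atkin--Lehner (Theorem~\ref{thm:atkinlehner}) restricts each $a_{p_i}$ to at most two values for $p_i\mid N$, and a pigeonhole argument on the $\dim S>2^t$ newforms produces two distinct newforms agreeing on $a_{p_1},\ldots,a_{p_t}$. Your one-shot pigeonhole on tuples is just a repackaging of the paper's iterated pigeonhole, and your explicit passage from agreement on prime-indexed coefficients to agreement on all $a_n$ with $n\le p_{t+1}-1$ via Lemma~\ref{primes} is a point the paper leaves implicit.
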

\begin{proof}
  Let $i\in\{1,2,\ldots,t\}$, and note that $p_i\mid N$. By
  Theorem~\ref{thm:atkinlehner}, there are at most $2$ possible values for the
  $p_i$-th coefficient of a newform in $S$. (There is one possibility if
  $p_i^2\mid N$, and two possibilities if $p_i^2\nmid N$.)

Since $S$ has a basis given by its newforms, there exist $\dim S\geq 2^t+1$
  distinct newforms in $S$. By the pigeonhole principle, there exist at least
  $2^{t-1}+1$ distinct newforms in $S$ with the same Fourier coefficient $a_2$.  Among
  these, there exist at least $2^{t-2}+1$ distinct newforms with the same
  $a_3$. Continuing in this way, there exist at least $2^{t-t}+1=2$ distinct
  newforms with the same $a_2,a_3,a_5,\ldots,a_{p_t}$. We conclude that $n_0\geq p_{t+1}$.
\end{proof}

Martin~\cite[Theorem 1]{Mar2005} provides a formula for $\dim S$. 
Combining it with Corollary~\ref{cor:geq} gives rise to the 
``easy half'' of Conjecture~\ref{conj:stability_squarefree}:

\begin{theorem}
  \label{thm:stability_easy_half}
  Let $N \in \ZZ_{>0}$, and let $k \ge 38$ be an even integer. Then $n_0(N,k)$ is greater than or equal to the least prime that does not divide $N$.
\end{theorem}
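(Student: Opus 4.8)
The plan is to deduce Theorem~\ref{thm:stability_easy_half} from Corollary~\ref{cor:geq} by verifying that the dimension hypothesis $\dim S > 2^t$ always holds once $k \ge 38$. Here $t = \omega(\gcd(N, \text{something}))$ is, more precisely, the number of consecutive primes $2, 3, 5, \ldots, p_t$ all dividing $N$; in particular $t \le \omega(N)$ and the product $p_1 p_2 \cdots p_t$ divides $N$, so $N \ge 2 \cdot 3 \cdots p_t =: P_t$ is the $t$-th primorial. The crux is therefore a lower bound for $\dim \skn$ in terms of $N$ and $k$ that beats $2^t$.

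First I would invoke Martin's formula \cite[Theorem 1]{Mar2005} for $\dim S = \dim \skn$. For squarefree-type purposes the dominant term is of the shape $\frac{k-1}{12}\psi(N)\cdot(\text{correction})$ where $\psi(N) = N\prod_{p\mid N}(1+1/p) = [\SL_2(\ZZ):\Gamma_0(N)]$, with lower-order corrections coming from elliptic points, cusps, and the new/old decomposition (an inclusion-exclusion over divisors of $N$, which for the newspace introduces a multiplicative factor $\prod_{p \mid N}(1 - \text{stuff})$ that for squarefree $N$ is roughly $\prod_{p\mid N}(1 - 2/p + \cdots)$ but in general is bounded below by a positive constant times $\psi(N)/N$ behaviour). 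The key structural point is that $\dim S$ grows at least linearly in $k$ with a positive leading coefficient $c(N) > 0$ for each fixed $N$ with $\dim S$ eventually nonzero, and crucially $c(N)$ itself does not decay too fast in $N$ — it is bounded below by something like a constant times $\varphi(N)/\text{(small factor)}$, comfortably $\ge 1$ for $N \ge 3$ once the multiplicative corrections are accounted for.

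The main step is then the arithmetic inequality: I would show that for $k \ge 38$ and any $N$,
\[
\dim \skn > 2^{t(N)},
\]
where $t(N)$ is as in Corollary~\ref{cor:geq}. I would split into cases. For $N$ with $t = 0$ or $t = 1$ (so $N$ coprime to $2$, or $N$ even but $3 \nmid N$), $2^t \le 2$ and one only needs $\dim S \ge 3$, which a direct lower bound from Martin's formula gives for all even $k \ge 38$ (the threshold $38$ is presumably exactly what is needed to clear the small levels, e.g.\ $N = 1$ where $\dim \skn$ first exceeds $2$ around $k = 28$ but one wants uniformity and a safety margin; the worst small cases should be checked by the explicit formula or a finite table). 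For larger $t$, I would use $N \ge P_t$ (the $t$-th primorial), so $\psi(N) \ge \psi(P_t) = P_t \prod_{i\le t}(1 + 1/p_i) = \prod_{i \le t}(p_i + 1)$, and compare $\prod_{i\le t}(p_i+1)$ against $2^t$: since $p_i + 1 \ge 3$ for $i \ge 1$ (and $\ge 4, 6, 8, \ldots$ thereafter), $\prod_{i\le t}(p_i+1)$ dominates $2^t$ by an ever-widening margin, so even after multiplying $\psi(N)$ by the newform correction factor and the $\frac{k-1}{12}$ weight factor with $k \ge 38$, the result exceeds $2^t$. One must be a little careful that the new/old correction does not kill too much: for squarefree $N$ it is $\prod_{p\mid N}(1 - 2/p + 1/p) \cdot(\ldots)$-ish, but combined with the generous weight factor $\frac{37}{12} > 3$ and the elliptic/cusp corrections being lower order, the inequality holds with room to spare.

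I expect the main obstacle to be precisely this last bookkeeping: Martin's dimension formula is a sum of several terms with signs (the $\frac{k-1}{12}\psi(N)$ main term, a $-\frac12 \prod(\ldots)$ cusp-type term, elliptic-point terms involving $\chi$ or class-number-like quantities, all threaded through the Möbius-style new/old inclusion-exclusion over square divisors), and one needs a clean uniform lower bound of the form $\dim \skn \ge A(k)\cdot B(N)$ with $A(38) > 3$ and $B(N) \ge 2^{t(N)}/3$ for all $N$. Getting the constant threshold $k \ge 38$ sharp (rather than, say, $k \ge 50$) likely requires treating a finite list of small $N$ — those with $\dim \skn$ small for moderate $k$ — by direct computation, and then a general estimate for the rest. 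The genuinely delicate point is ensuring the newform correction factor for non-squarefree $N$ (where old-from-lower-level multiplicities appear) still leaves $\dim S$ large compared to $2^{t}$; but since $t$ only counts an \emph{initial} run of primes and forces $N$ to be at least the corresponding primorial, $\psi(N)$ outpaces $2^t$ by enough that even a crude positive lower bound on the correction suffices.
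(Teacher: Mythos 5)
Your proposal takes essentially the same route as the paper: reduce to Corollary~\ref{cor:geq} and use Martin's dimension formula \cite[Theorem 1]{Mar2005} to verify $\dim S > 2^t$ for all even $k \ge 38$, with an asymptotic estimate for large $N$ (exploiting that $t$ forces $N$ to be divisible by the $t$-th primorial, so the main term outpaces $2^t$) together with a finite check for small $N$. The paper carries out the bookkeeping you defer by splitting at $N = 1000$: for $N \ge 1000$ it bounds the correction terms via elementary inequalities such as $\prod_{p \mid N}(1-p^{-1}-p^{-2})^{-1} \le \tfrac{20}{9}(\tfrac95)^{\omega(N)}$ and $2^{\omega(N)} < 5N^{1/4}$, and for $N < 1000$ it verifies the required inequality by direct computation, with $N = 2, 4, 6, 12$ treated by hand --- these small even levels (not $N=1$) are what pin the threshold at $k \ge 38$.
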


\begin{proof} As $k > 2$ is even, Martin~\cite[Theorem 1]{Mar2005} gives
\begin{equation} \label{MartinFormula}
\dim S = (k-1)N s_0^+(N)/12 - v_\infty^+(N)/2 + c_2(k) v_2^+(N) + c_3(k) v_3^+(N),
\end{equation}
where $s_0^+$, $v_\infty^+$, $c_2$, $v_2^+$, $c_3$ 
and $v_3^+$ are certain quickly computable arithmetic functions. 

First suppose $N \ge 1000$. We deduce from \eqref{MartinFormula}
and the definitions of the arithmetic functions therein that 
\[
\dim S \ge
 (k-1)N/12 \times \Bigl( \prod_{p | N} (1- p^{-1}- p^{-2}) \Bigr)
- \sqrt{N}/2 - 17/12 \times 2^{\omega(N)}.
\]
Since $t \le \omega(N)$ in Corollary \ref{cor:geq}, 
it now remains to show that 
\begin{equation} \label{goal}
k > 1 + 12/N \times (\sqrt{N}/2 + 29/12 \times 2^{\omega(N)}) 
\times \prod_{p | N} (1-p^{-1}- p^{-2})^{-1}.
\end{equation}
We may easily verify the bounds
\[
\prod_{p | N} (1-p^{-1}- p^{-2})^{-1} \le 20/9 \times (9/5)^{\omega(N)},
\]
$(9/5)^{\omega(N)} < 2.8N^{1/4}$ and $2^{\omega(N)} < 5N^{1/4}$. Since
\[
k \ge 38 > 1 + 6.23(6N^{-1/4} + 145 N^{-1/2})
\]
for all $N \ge 1000$, we now have \eqref{goal}.

For $N = 2, 4, 6, 12$, we shall use \eqref{MartinFormula} to check
that the hypothesis of Corollary \ref{cor:geq} is met.
We note that $-3/4 < c_2(k) \le 1/4$ and $-2/3 < c_3(k) \le 1/3$.
If $N=2$ then \eqref{MartinFormula} yields
\[
\dim S = (k-1)/12 - c_2(k) - 2c_3(k) \ge 37/12 - 1/4 - 2/3 > 2 = 2^t.
\]
If $N = 4$ then
\eqref{MartinFormula} yields
\[
\dim S = (k-1)/12 - c_2(k) + c_3(k) > 37/12 - 1/4 - 2/3 > 2 = 2^t.
\]
If $N = 12$ then
\eqref{MartinFormula} yields
\[
\dim S = (k-1)/6 + 2c_2(k) - c_3(k) > 37/6 - 3/2 - 1/3 > 4 = 2^t.
\]
Consider $N=6$. By \eqref{MartinFormula},
it suffices to prove that
\[
(k-1)/6 + 2c_2(k) + 2c_3(k) > 4.
\]
We can verify this directly for $k= 38, 40$, while if $k \ge 42$ then
\[
(k-1)/6 + 2c_2(k) + 2c_3(k) > 41/6 - 3/2 - 4/3 = 4.
\]

Finally, suppose that $N < 1000$ with $N \notin \{ 2,4,6,12 \}$. By direct computation, we have 
\[ 
k \ge 38 > 1+\frac{12 (v_\infty^+(N)/2  + 3| v_2^+(N)|/4 + 2|v_3^+(N)|/3 + 2^t)  }
{N s_0^+(N)}.
\]
The proof is completed via \eqref{MartinFormula}
and Corollary \ref{cor:geq},
recalling that $|c_2(k)| < 3/4$ and $|c_3(k)| < 2/3$.
\end{proof}

The following example suggests that the non-squarefree level case is more complicated. 
We take the following definition from \cite[p. 2]{Tsa2014}. Let $N, k \in \ZZ_{>0}$, 
let $f \in S_k(\Gamma_0(N))$, and let $\chi$ be a Dirichlet character.
The \emph{twist of $f$ by $\chi$} is given by
\[
f \otimes \chi (q) = \sum_{n=1}^\infty \chi(n) a_n(f) q^n.
\]

\begin{example}
  Let $S = S_k^{\mathrm{new}}(\Gamma_0(49))$, and let $\chi$ be the Legendre symbol modulo 7, i.e.
\[ 
\chi(n) = \Big(\frac n7 \Big) \qquad (n \in \ZZ).
\]
For each $k \in \{4,6,8,10,12\}$, we observe newforms $f,g \in S$ such that 
$f = g \otimes \chi$ and $g = f \otimes \chi$. As $\chi(2) = 1$, 
we have $a_2(f) = a_2(g)$, so $T_2$ fails to distinguish $f$ and $g$. 
\end{example}

This phenomenon is closely related to the existence of CM forms 
(see \cite[p. 2]{Tsa2014}). 
Indeed, in the example above $f+g$ has complex multiplication by $\chi$.  It
is likely that the forms $f$ and $g$, although ``new'' in the usual sense
(not arising from $\Gamma_0(7)$ or $\Gamma_0(1)$), are coming from a different
congruence subgroup $\Gamma$ of level $7$.  For an explanation of this type of
behaviour for $\Gamma_0(9)$, see~\cite{Str2012}.

There may be a more general stability phenomenon which also encompasses 
non-squarefree values of $N$.
In the cases $N=49, 108, 147, 225$, one might
predict from the data that $n_0$ stabilises towards a prime that exceeds the 
least prime that does not divide $N$
(see Tables \ref{table5} and \ref{table8}).
For all other values of $N$ that we examined, it would appear that $n_0$
stabilises towards the least prime that does not divide $N$.
Does there always exist $K\in\ZZ_{>0}$ 
such that if $k\geq K$ is an even integer then $n_0(N,k)=n_0(N,K)$?

\section{Irreducibility of Hecke polynomials}
\label{gen_maeda}

A \emph{Hecke polynomial} is the characteristic polynomial of a Hecke operator
$T_n$ acting on a space of modular forms. In the 1970s, Maeda observed that the Hecke polynomials of $T_2$ on
$S_k(\SL_2(\ZZ))$ are irreducible over $\QQ$ for all $k$ such that $\dim
S_k(\SL_2(\ZZ)) \leq 12$.  Over the next 20 years, this observation matured
into the following statement:

\begin{conjecture}[Maeda~\cite{HM1997}]
  \label{conj:maeda}
  Let $k\in\ZZ_{>0}$, $n\in\ZZ_{>1}$, and let $F\in\ZZ[X]$ be the
  characteristic polynomial of the Hecke operator $T_n$ acting on
  $\cS:=S_k(\SL_2(\ZZ))$.  Then $F$ is irreducible over $\QQ$ and its Galois
  group $G$ is isomorphic to the symmetric group $\Sigma_{\dim \cS}$.
\end{conjecture}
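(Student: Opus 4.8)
The plan is to aim directly at the stronger assertion $G\cong\Sigma_m$ (where $m:=\dim\cS$ and $F$ is the characteristic polynomial of $T_n$ on $\cS$ for the fixed $n\ge 2$), since this already forces the irreducibility of $F$ over $\QQ$: a permutation group transitive on the roots admits no factorisation of the polynomial over the base field. The route is the classical one for certifying a full symmetric Galois group. Viewing $G\le\Sigma_m$ through its action on the roots of $F$, I would show that $G$ is transitive, that $G$ is primitive, and that $G$ contains a transposition; then Jordan's theorem (a primitive subgroup of $\Sigma_m$ containing a transposition is all of $\Sigma_m$) concludes. Since $F\in\ZZ[X]$, each ingredient can be certified by producing a single auxiliary prime $p$ with $p\nmid\operatorname{disc}(F)$ and reading the cycle type of a Frobenius at $p$ off the factorisation type of $F\bmod p$ (Dedekind's theorem): a prime at which $F$ stays irreducible modulo $p$ yields an $m$-cycle, hence transitivity; a prime at which $F\bmod p$ has an irreducible factor of prime degree $\ell>m/2$ yields, together with transitivity, primitivity, since a block of size $<\ell$ cannot contain an $\ell$-cycle while two blocks of size $\ge\ell$ would overfill $m$ points; and a prime at which $F\bmod p$ is a product of one irreducible quadratic and $m-2$ distinct linear factors yields a transposition.

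The problem thus collapses to one arithmetic question: for each of these three factorisation shapes, does there exist a prime $p$ not dividing $\operatorname{disc}(F)$ at which $F$ realises it? If one already knew $G=\Sigma_m$ this would hold with positive density by Chebotarev, but unconditionally one needs genuine control of the reductions of the Hecke polynomial. The inputs I would try to marshal are: (a) the largeness of the residual Galois representations attached to the newform of weight $k$ and level $1$, known for all but finitely many residue characteristics (Ribet, Momose), which pins down a single Frobenius conjugacy class without controlling the global permutation action; (b) bounds on the congruence primes, that is, on the primes dividing $\operatorname{disc}(F)$ and the discriminant of the Hecke order on $\cS$, via adjoint $L$-values and Hida theory, so that one knows which $p$ are admissible for Dedekind's theorem; and (c) an \emph{effective, joint} equidistribution statement for the Hecke eigenvalues $a_p$ as $p$ varies, strong enough to locate, below an explicit bound, an admissible prime of each required factorisation type. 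For a concrete weight $k$ the required primes are simply exhibited by computer, which is exactly how the conjecture has been verified over large ranges of $k$ in the works cited above; the difficulty is doing it uniformly in $k$.

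The main obstacle, and the reason the conjecture is open, is (c): there is at present no mechanism that forces the Hecke polynomial to attain a prescribed factorisation type modulo some explicitly bounded prime, and even producing one prime at which $F$ is irreducible mod $p$ is not known in general, since effective Chebotarev would do it only once $G$ is known, which is circular. Moreover, the complex multiplication phenomenon displayed at level $49$ in the Example above shows that accidental algebraic relations among Hecke eigenvalues really do occur at higher levels, so any proof must contain an ingredient specific to level $1$ that rules them out --- the natural candidates being Rankin--Selberg theory and the fact that $S_k(\SL_2(\ZZ))$ contains no CM forms --- and then upgrade this qualitative non-degeneracy to the quantitative statement that the Hecke order on $\cS$ is ``as non-split as possible'' away from a controllable set of primes. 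I would place the crux precisely there, perhaps to be approached via $p$-adic deformation theory of the Hecke algebra or a sufficiently uniform two-parameter Sato--Tate equidistribution, and I see no route to it. It is worth noting, finally, that the present paper needs only the irreducibility of the characteristic polynomial of $T_2$ on $\cS$: that alone forces the $\dim\cS$ weight-$k$ eigenforms to have pairwise distinct values of $a_2$, whence $n_0(1,k)\le 2$.
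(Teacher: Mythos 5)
This statement is a \emph{conjecture} (due to Maeda), not a theorem: the paper supplies no proof, only a pointer to~\cite{GM2012} for computational verification of the $T_2$ case up to weight $\numprint{14000}$, and it uses the conjecture only as a hypothesis (to deduce $n_0(1,k)\le 2$). So the honest assessment is that your proposal is not, and could not be, a complete proof --- and to your credit you say so explicitly. The group-theoretic skeleton you lay out is sound and is indeed the standard certification route: realise $G$ inside $\Sigma_{\dim\cS}$ via the action on the roots, then use Dedekind's theorem at primes not dividing $\operatorname{disc}(F)$ to exhibit an $m$-cycle (transitivity), a long prime-length cycle (primitivity), and a transposition, and finish with Jordan's theorem. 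This is exactly how individual weights are verified by machine, and your observation that full symmetric Galois group subsumes irreducibility is correct (the roots are necessarily distinct once $G$ acts as $\Sigma_{\dim\cS}$ on $\dim\cS$ of them).

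The genuine gap is the one you name: step (c). There is no known unconditional mechanism --- effective Chebotarev being circular here, and no equidistribution statement for the joint reductions of Hecke eigenvalues being strong enough --- that produces, uniformly in $k$, even a single prime $p$ at which $F$ remains irreducible modulo $p$, let alone primes realising the other two factorisation shapes. Your diagnosis that any proof must exploit something special to level $1$ (absence of CM forms, no Atkin--Lehner decomposition) is consistent with the paper's own discussion: the level-$49$ example and Tsaknias's observation that squarefree level $N$ forces at least $2^{\omega(N)}$ Galois orbits show that irreducibility genuinely fails at higher level, which is why Conjecture~\ref{conj:gen_maeda} is formulated with the Atkin--Lehner eigenspaces as the ``only obstacle.'' Your closing remark is also accurate and matches the paper's introduction: irreducibility of the characteristic polynomial of $T_2$ alone already gives $d=\dim\cS$ distinct values of $a_2$ among the eigenforms, hence $n_0(1,k)\le 2$, and combined with~\cite[Theorem 1]{Mar2005} (which gives $\dim\cS\ge 2$ for even $k\ge 28$) one gets $n_0(1,k)=2$ there. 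In short: correct strategy, correctly identified obstruction, no proof --- which is the expected state of affairs for an open conjecture.
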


We refer the reader to~\cite{GM2012} for a survey of results on Maeda's
conjecture and a report on its verification for the operator $T_2$ and weights
$k\leq\numprint{14000}$.

In~\cite{Tsa2014}, Tsaknias considers higher-level generalisations of the
following weak version of Conjecture~\ref{conj:maeda}: \emph{there is a unique
Galois orbit of Hecke eigenforms in $S_k(\SL_2(\ZZ))$}.  We describe
his findings in the squarefree level case.  If $N=p_1p_2\ldots p_t$ is
squarefree, the Atkin-Lehner involutions $w_{p_i}$ decompose the space of
newforms into eigenspaces
\begin{equation*}
  \skn = \bigoplus_{\epsilon\in\{\pm 1\}^t} S_\epsilon.
\end{equation*}
Tsaknias's computations indicate that, for $k$ large enough, the space
$\skn$ has $2^t$ Galois orbits of newforms. 
The second author and J. Withers~\cite{GW2015} have
investigated higher-level analogues of the full Maeda conjecture. Their
experiments suggest the following statement in the squarefree level case:

\begin{conjecture}
  \label{conj:gen_maeda}
  Let $N\in\ZZ_{>0}$ be squarefree. Then there exists $K^\prime\in\ZZ_{>0}$
  such that the following hold whenever $k \ge K^\prime$ is even and $n \ge 2$
  is coprime to $N$:
\begin{enumerate}[(a)]
    \item The characteristic polynomial $F$ of the Hecke operator $T_n$ acting
      on $\skn$ is separable (that is, $F$ has no repeated roots over
      $\overline{\QQ}$).
    \item The Atkin-Lehner decomposition
      \begin{equation*}
        \skn = \bigoplus_{\epsilon} S_\epsilon
      \end{equation*}
      is the only obstacle to the irreducibility of the polynomial $F$.
  \end{enumerate}
\end{conjecture}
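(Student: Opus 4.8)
The plan is to exploit the structure already visible in the statement: since $n$ is coprime to $N$, the Hecke operator $T_n$ commutes with every Atkin--Lehner involution $w_p$ for $p\mid N$ acting on $\skn$ (see~\cite{AL1970}), so $T_n$ preserves each summand $S_\epsilon$ of the Atkin--Lehner decomposition. Writing $F_\epsilon$ for the characteristic polynomial of $T_n|_{S_\epsilon}$, this yields a factorisation $F=\prod_\epsilon F_\epsilon$ over $\ZZ$. Part~(a) then splits into two tasks: each $F_\epsilon$ is separable, and the $F_\epsilon$ are pairwise coprime. Part~(b), read as the assertion that each $F_\epsilon$ is irreducible over $\QQ$ (and, in the spirit of Conjecture~\ref{conj:maeda}, has Galois group $\Sigma_{\dim S_\epsilon}$), would together with~(a) pin down the exact factorisation type of $F$. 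The bookkeeping here is routine: the dimensions $\dim S_\epsilon$ are computable from Eichler--Selberg trace-formula data incorporating the $w_p$, along the lines of Martin~\cite{Mar2005}, so one can at least predict the degrees of the conjectural irreducible factors.

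The second step is to treat each block $S_\epsilon$ exactly as one treats $S_k(\SL_2(\ZZ))$ in the classical Maeda setting. For fixed $k$, $n$ and $\epsilon$ the standard strategy of~\cite{GM2012} applies verbatim: compute $F_\epsilon$, reduce it modulo several small primes $\ell\nmid nN$, factor the reductions, and read off the cycle types of the corresponding Frobenius elements; if one exhibits an $\ell$ for which $F_\epsilon\bmod\ell$ is irreducible (a $\dim S_\epsilon$-cycle) together with an $\ell$ whose factorisation produces a transposition (for instance one irreducible quadratic factor and otherwise distinct linear factors), a classical argument forces the Galois group of $F_\epsilon$ over $\QQ$ to be all of $\Sigma_{\dim S_\epsilon}$, yielding both irreducibility and the full symmetric group. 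The pairwise coprimality in~(a) can be attacked in the same vein: if $\dim S_\epsilon\neq\dim S_{\epsilon'}$ and both factors are irreducible they are automatically coprime, and when the degrees agree the accidental equality $F_\epsilon=F_{\epsilon'}$---which one expects for no $n$ once $k$ is large---is detected in any fixed case by a single mod-$\ell$ comparison. This is precisely the kind of computation carried out in~\cite{GW2015}, and it settles any individual instance $(N,k,n)$.

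The main obstacle is exactly where the classical machinery runs out: producing a statement uniform in $k\geq K'$ \emph{and} in every $n$ coprime to $N$. Even for $N=1$ this is open; Maeda's conjecture has only been verified in finite ranges of $k$ (for $T_2$, up to $k\leq\numprint{14000}$, see~\cite{GM2012}), and no method is known that certifies irreducibility for all large $k$ simultaneously, let alone for all Hecke operators at once. The Atkin--Lehner reduction cleanly localises the squarefree-level question onto the blocks $S_\epsilon$, but each block inherits the full strength of Maeda's conjecture. I would therefore present the above as a program rather than a proof, expecting that any unconditional progress must first be made at level $1$.
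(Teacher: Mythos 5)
This statement is a conjecture, and the paper offers no proof of it --- only computational verification for squarefree $N\leq 200$, even weights $k\leq 30$, and operators $T_p$ with $p<100$ prime and not dividing $N$. Your write-up correctly identifies this: the Atkin--Lehner reduction to the blocks $S_\epsilon$ matches the paper's own framing of part (b), and your conclusion that each block inherits the full (open) difficulty of Maeda's conjecture, so that only finite-range verification is possible, is precisely the situation; treating this as a program plus case-by-case computation rather than a proof is the right call.
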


These statements have been verified computationally for squarefree $N\leq
200$, even weights $k\leq 30$ and operators $T_p$ for $p <100$ prime and not
dividing $N$. Our immediate interest in Conjecture~\ref{conj:gen_maeda} is the following
result:

\begin{theorem}
  Part (a) of the generalised Maeda Conjecture~\ref{conj:gen_maeda} implies the stability
  Conjecture~\ref{conj:stability_squarefree}.
\end{theorem}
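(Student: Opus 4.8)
The plan is to deduce $n_0(N,k) = \ell$, where $\ell$ denotes the least prime not dividing $N$, for all large even $k$, by combining the already-proven lower bound with separability of a single Hecke polynomial. Fix a squarefree $N$, let $\ell$ be the least prime not dividing $N$, and let $K'$ be the constant supplied by Conjecture~\ref{conj:gen_maeda}(a) for this $N$. I would set $K = \max(K', 38)$ and fix an even integer $k \ge K$; the goal is then to show $n_0(N,k) = \ell$.

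For the lower bound $n_0(N,k) \ge \ell$ there is nothing new to do: this is exactly Theorem~\ref{thm:stability_easy_half}, which applies since $k \ge 38$ (and whose proof in fact gives $\dim \skn > 2^t \ge 1$, so we are never in the degenerate case $n_0 = 0$). For the upper bound $n_0(N,k) \le \ell$, observe that $\ell$ is a prime, hence $\ell \ge 2$, and $\ell \nmid N$, so Conjecture~\ref{conj:gen_maeda}(a) applies with $n = \ell$: the characteristic polynomial $F$ of $T_\ell$ acting on $S := \skn$ has no repeated roots over $\overline{\QQ}$. Since the newforms $f_1, \ldots, f_d$ form a basis of $S$ consisting of $T_\ell$-eigenvectors with respective eigenvalues $a_\ell(f_i)$ (by \eqref{coincide}), we have $F = \prod_{i=1}^{d} (X - a_\ell(f_i))$, so separability of $F$ forces the $a_\ell(f_i)$ to be pairwise distinct. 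Consequently any two distinct newforms in $S$ are distinguished already by their $\ell$-th Fourier coefficient, so if newforms $f, g \in S_k(\Gamma_0(N))$ satisfy $a_n(f) = a_n(g)$ for all $n \le \ell$ --- in particular for $n = \ell$ --- then $f = g$. By minimality this yields $n_0(N,k) \le \ell$, and together with the lower bound we conclude $n_0(N,k) = \ell$, which is Conjecture~\ref{conj:stability_squarefree} with $K = \max(K',38)$.

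I do not expect a genuine obstacle here: the argument is essentially bookkeeping, since all the substance has been pushed into Conjecture~\ref{conj:gen_maeda}(a) and the already-established Theorem~\ref{thm:stability_easy_half}. The one point worth flagging is the observation that part (a) alone --- separability of the single polynomial $F$ at the prime $\ell$ --- is what delivers the upper bound; one needs neither the irreducibility assertion of part (b) nor any fresh input about the primes dividing $N$ (such information enters only through Theorem~\ref{thm:stability_easy_half}, which in turn rests on Atkin--Lehner and Martin's dimension formula). A secondary routine check is that ``newform in $S_k(\Gamma_0(N))$'', as used in the definition of $n_0$, means precisely a normalised Hecke eigenform lying in $\skn$, so that the newforms in play are exactly $f_1, \ldots, f_d$ and the eigenvalue argument applies verbatim.
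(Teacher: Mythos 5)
Your proposal is correct and follows essentially the same route as the paper's proof: lower bound from Theorem~\ref{thm:stability_easy_half}, and upper bound by noting that the newforms form an eigenbasis for $T_\ell$ at the least prime $\ell \nmid N$, so separability of its characteristic polynomial (Conjecture~\ref{conj:gen_maeda}(a)) forces the eigenvalues $a_\ell(f_i)$ to be pairwise distinct. The paper likewise takes $K=\max\{38,K'\}$, so there is nothing to add.
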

\begin{proof}
  Let $N\in\ZZ_{>0}$ be squarefree. Let $K=\max\{38,K^\prime\}$, with
  $K^\prime$ provided by
  Conjecture~\ref{conj:gen_maeda}. Let $p$ be the least prime that does 
not divide $N$, and let $k\geq K$ be even. By
 Theorem~\ref{thm:stability_easy_half}, we have $n_0(N,k)\geq p$. So it
  suffices to show that if $f,g \in S_k(\Gamma_0(N))$ are distinct
newforms then $a_p(f)\neq a_p(g)$.

  Let $\mathcal{B}=\{f_1,f_2,\ldots,f_d\}$ be the newforms in
  $S_k(\Gamma_0(N))$, where $d=\dim\skn$. We know that $\mathcal{B}$ is a
  basis for $\skn$, so $\{a_p(f_1), a_p(f_2),\ldots, a_p(f_d)\}$ is precisely
  the set of roots of the characteristic polynomial of $T_p$.
  But by part (a) of Conjecture~\ref{conj:gen_maeda}, this polynomial has $d$
  distinct roots, hence $a_p(f_i)\neq a_p(f_j)$ for $i\neq j$.
\end{proof}

\section*{Appendix: Data}

We tabulate $n_0 = n_0(N,k)$ for $k$ even (if $k$ is odd then $n_0 = 0$).
We put $N$ on the vertical axis and $k$ on the horizontal axis, so that
along any row $N$ is fixed and $k$ varies. A larger set of 
data may be found at the second author's
webpage.\footnote{\url{http://aghitza.org/research/}}

\FloatBarrier
\begin{table}[h!]
\caption{Some values of $n_0(N,k)$.}
\label{table1}
\begin{center}
\begin{tabular*}{\textwidth}{@{} |r| @{\extracolsep{\fill} } cccccccccc|}
\hline
\diaghead{\theadfont[width=11em]}
{N}{k}&
38&40&42&44&46&48&50&52 & 54&56\\
\hline
1 &2&2&2&2&2&2&2&2&2&2\\
2 &3&3&3&3&3&3&3&3&3 & 3\\
3 &2&2&2&2&2&2&2&2&2&2\\
4 &3&3&3&3&3&3&3&3&3&3\\
5 &2&2&2&2&2&2&2&2&2&2\\
6 &5&5&5&5&5&5&5&5&5&5\\
\hline
\end{tabular*}
\end{center}
\end{table}
\FloatBarrier

\FloatBarrier
\begin{table}[h!]
\caption{More values of $n_0(N,k)$.}
\begin{center}
\begin{tabular*}{\textwidth}{@{} |r| @{\extracolsep{\fill} } cccccccccc|}
\hline
\diaghead{\theadfont[width=11em]}
{N}{k}&
24&26&28&30&32&34&36&38&40&42\\
\hline
7 &2&2&2&2&2&2&2&2&2&2\\
8 &3&3&3&3&3&3&3&3&3&3\\
9 &2&2&2&2&2&2&2&2&2&2\\
10 &3&3&3&3&3&3&3&3&3&3\\
11 &2&2&2&2&2&2&2&2&2&2\\
12 &5&5&5&5&5&5&5&5&5&5\\
\hline
\end{tabular*}
\end{center}
\end{table}
\FloatBarrier

\FloatBarrier
\begin{table}[h!]
\caption{More values of $n_0(N,k)$.}
\begin{center}
\begin{tabular*}{\textwidth}{@{} |r| @{\extracolsep{\fill} } cccccccccc|}
\hline
\diaghead{\theadfont[width=11em]}
{N}{k}&
12&14&16&18&20&22&24&26&28&30\\
\hline
13&2&2&2&2&2&2&2&2&2&2\\
14&3&3&3&3&3&3&3&3&3&3\\
15&2&2&2&2&2&2&2&2&2&2\\
16&3&3&3&3&3&3&3&3&3&3\\
17&2&2&2&2&2&2&2&2&2&2\\
18&5&5&5&5&5&5&5&5&5&5\\
19&2&2&2&2&2&2&2&2&2&2\\
20&3&3&3&3&3&3&3&3&3&3\\
21&2&2&2&2&2&2&2&2&2&2\\
22&3&3&3&3&3&3&3&3&3&3\\
23&2&2&2&2&2&2&2&2&2&2\\
24&5&5&5&5&5&5&5&5&5&5\\
25&2&2&2&2&2&2&2&2&2&2\\
26&3&3&3&3&3&3&3&3&3&3\\
\hline
\end{tabular*}
\end{center}
\end{table}
\FloatBarrier

\FloatBarrier
\begin{table}[h!]
\caption{More values of $n_0(N,k)$.}
\begin{center}
\begin{tabular*}{\textwidth}{@{} |r| @{\extracolsep{\fill} } cccccccccc|}
\hline
\diaghead{\theadfont[width=11em]}
{N}{k}&
10&12&14&16&18&20&22&24&26&28\\
\hline
27&2&2&2&2&2&2&2&2&2&2\\
28&3&3&3&3&3&3&3&3&3&3\\
29&2&2&2&2&2&2&2&2&2&2\\
30&5&5&7&7&7&7&7&7&7&7\\
31&2&2&2&2&2&2&2&2&2&2\\
32&2&3&3&3&3&3&3&3&3&3\\
33&2&2&2&2&2&2&2&2&2&2\\
34&3&3&3&3&3&3&3&3&3&3\\
35&2&2&2&2&2&2&2&2&2&2\\
36&5&5&5&5&5&5&5&5&5&5\\
37&2&2&2&2&2&2&2&2&2&2\\
\hline
\end{tabular*}
\end{center}
\end{table}
\FloatBarrier

\FloatBarrier
\begin{table}[h!]
\caption{More values of $n_0(N,k)$.}
\label{table5}
\begin{center}
\begin{tabular*}{\textwidth}{@{} |r| @{\extracolsep{\fill} } cccccccccccc|}
\hline
\diaghead{\theadfont[width=11em]}
{N}{k}&
2&4&6&8&10&12&14&16&18&20&22&24 \\
\hline
38&2&3&3&3&3&3&3&3&3&3&3&3\\
39&2&2&2&2&2&2&2&2&2&2&2&2\\
40&0&3&3&3&3&7&3&3&3&3&3&3\\
41&2&2&2&2&2&2&2&2&2&2&2&2\\
42&0&3&5&5&5&5&5&5&5&5&5&5\\
43&2&2&2&2&2&2&2&2&2&2&2&2\\
44&0&3&3&3&3&3&3&3&3&3&3&3\\
45&0&2&2&2&2&2&2&2&2&2&2&2\\
46&0&3&3&3&3&3&3&3&3&3&3&3\\
47&2&2&2&2&2&2&2&2&2&2&2&2\\
48&0&5&5&5&5&5&5&5&5&5&5&5\\
\hline
49&0&3&3&3&3&3&3&3&3&3&3&3\\
\hline
50&2&3&3&3&3&3&3&3&3&3&3&3\\
51&2&3&2&2&2&2&2&2&2&2&2&2\\
52&0&3&3&3&3&3&3&3&3&3&3&3\\
53&2&2&2&2&2&2&2&2&2&2&2&2\\
54&2&5&5&5&5&5&5&5&5&5&5&5\\
55&2&2&2&2&2&2&2&2&2&2&2&2\\
56&3&3&3&3&3&3&3&3&3&3&3&3\\
57&3&2&2&2&2&2&2&2&2&2&2&2\\
\hline
\end{tabular*}
\end{center}
\end{table}
\FloatBarrier

\FloatBarrier
\begin{table}[h!]
\caption{More values of $n_0(N,k)$.}
\begin{center}
\begin{tabular*}{\textwidth}{@{} |r| @{\extracolsep{\fill} } cccccccccccc|}
\hline
\diaghead{\theadfont[width=11em]}
{N}{k}&
2&4&6&8&10&12&14&16&18&20&22&24 \\
\hline
58&2&3&3&3&3&3&3&3&3&3&3&3\\
59&2&2&2&2&2&2&2&2&2&2&2&2\\
60&0&5&5&5&7&7&7&7&7&7&7&7\\
61&2&2&2&2&2&2&2&2&2&2&2&2\\
62&3&3&3&3&3&3&3&3&3&3&3&3\\
63&2&2&2&2&2&2&2&2&2&2&2&2\\
64&0&3&3&3&3&3&3&3&3&3&3&3\\
65&2&2&2&2&2&2&2&2&2&2&2&2\\
66&3&5&7&5&5&5&5&5&5&5&5&5\\
67&2&2&2&2&2&2&2&2&2&2&2&2\\
68&3&3&3&3&3&3&3&3&3&3&3&3\\
\hline
\end{tabular*}
\end{center}
\end{table}
\FloatBarrier

\FloatBarrier
\begin{table}[h!]
\caption{More values of $n_0(N,k)$.}
\begin{center}
\begin{tabular*}{\textwidth}{@{} |r| @{\extracolsep{\fill} } cccccccccc|}
\hline
\diaghead{\theadfont[width=11em]}
{N}{k}&
2&4&6&8&10&12&14&16&18&20 \\
\hline
69&2&2&2&2&2&2&2&2&2&2\\
70&0&3&3&3&3&3&3&3&3&3\\
71&2&2&2&2&2&2&2&2&2&2\\
72&0&5&5&5&5&5&5&5&5&5\\
73&2&2&2&2&2&2&2&2&2&2\\
74&3&3&3&3&3&3&3&3&3&3\\
75&2&2&2&2&2&2&2&2&2&2\\
76&0&3&3&3&3&3&3&3&3&3\\
77&3&2&2&2&2&2&2&2&2&2\\
78&0&5&5&5&5&5&5&5&5&5\\
79&2&2&2&2&2&2&2&2&2&2\\
80&3&7&3&3&3&7&3&3&3&3\\
81&2&2&2&2&2&2&2&2&2&2\\
82&3&3&3&3&3&3&3&3&3&3\\
83&2&2&2&2&2&2&2&2&2&2\\
84&3&3&5&5&5&5&5&5&5&5\\
85&2&3&2&2&2&2&2&2&2&2\\
86&3&3&3&3&3&3&3&3&3&3\\
87&2&2&2&2&2&2&2&2&2&2\\
\hline
\end{tabular*}
\end{center}
\end{table}
\FloatBarrier

\FloatBarrier
\begin{table}[h!]
\caption{More values of $n_0(N,k)$.}
\label{table8}
\begin{center}
\begin{tabular*}{\textwidth}{@{} |r| @{\extracolsep{\fill} } cccccccc|}
\hline
\diaghead{\theadfont[width=11em]}
{N}{k}&
2&4&6&8&10&12&14&16\\
\hline
88&3&3&3&3&3&3&3&3\\
89&2&2&2&2&2&2&2&2\\
90&5&7&7&7&7&7&7&7\\
91&2&2&2&2&2&2&2&2\\
92&3&3&3&3&3&3&3&3\\
93&2&2&2&2&2&2&2&2\\
94&3&3&3&3&3&3&3&3\\
95&2&3&2&2&2&2&2&2\\
96&3&5&5&5&5&5&5&5\\
97&2&2&2&2&2&2&2&2\\
98&3&3&3&3&3&3&3&3\\
99&5&2&2&2&2&2&2&2\\
100&0&3&3&3&3&3&3&3\\
\hline
108&0&7&7&5&7&5&5&7\\
147&3&5&3&3&3&3&&\\
225&7&7&7&7&7&7&&\\
\hline
\end{tabular*}
\end{center}
\end{table}
\FloatBarrier

\end{document}